\numberwithin{equation}{section}
\title{Adaptive Optimal Transport}
\author[1]{Montacer Essid}
\author[2]{Debra Laefer}
\author[1]{Esteban G. Tabak}
\affil[1]{Courant Institute of Mathematical Sciences, NYU, \protect \\251 Mercer St, New York, NY 10012}
\affil[2]{NYU Center of Urban Science and Progress, \protect \\ 370 Jay St, Brooklyn, NY 11201}
\date{\today}
\begin{document}
\maketitle

\begin{abstract}
An adaptive, adversarial methodology is developed for the optimal transport problem between two distributions $\mu$ and $\nu$, known only through a finite set of independent samples $(x_i)_{i=1..N}$ and $(y_j)_{j=1..M}$. The methodology automatically creates features that adapt to the data, thus avoiding reliance on a priori knowledge of data distribution. Specifically, instead of a discrete point-by-point assignment, the new procedure seeks an optimal map $T(x)$ defined for all $x$, minimizing the Kullback-Leibler divergence between $(T(x_i))$ and the target $(y_j)$. The relative entropy is given a sample-based, variational characterization, thereby creating an adversarial setting: as one player seeks to push forward one distribution to the other, the second player develops features that focus on those areas where the two distributions fail to match. The procedure solves local problems matching consecutive, intermediate distributions between $\mu$ and $\nu$. As a result, maps of arbitrary complexity can be built by composing the simple maps used for each local problem. Displaced interpolation is used to guarantee global from local optimality. The procedure is illustrated through synthetic examples in one and two dimensions.
\end{abstract}

% !TEX root = ../adaptiveot.tex

\section{Introduction}

The optimal transport problem consists of finding, from among all transformations $y = T(x)$ that push forward a source distribution $\mu(x)$ to a target $\nu(y)$, the map that minimizes the expected transportation cost:
\begin{equation}
\min_T  \int c\left(x, T(x)\right) \mu(x) \ dx, \quad T_{\#} \mu = \nu,
\label{eq:MOT}
\end{equation}
where $c(x,y)$ is the externally provided cost of moving a unit of mass from $x$ to $y$ \cite{villani2003topics}. The application for which Monge formulated the optimal transport problem was the actual transportation of material between two sites at minimal cost \cite{monge1781memoire}. Two centuries later, starting with Kantorovich and Koopmans \cite{kantorovich1942v}, the problem was relaxed from maps to couplings, and applied to more general matching problems, such as matching supply and demand or positions and employees. More recently, the optimal transport problem has become a central tool in many computer and data science applications, as well as in analysis and partial differential equations. Among the many applications for which optimal transport could be used, the particular one that drove the methodology proposed in this article is change detection, for which one seeks a correspondence between two point clouds (from remote sensing data -- either imagery or laser scanning) in order to identify differences between them.

The numerical solution of optimal transportation problems has been an active area of research for some years. When the two measures $\mu$ and $\nu$ have discrete support, the relaxation of optimal transport due to Kantorovich \cite{kantorovich1942v} becomes a linear programming problem, which can be solved effectively for problems of small and medium size. When the size of the problem grows, its solution can be accelerated significantly through the addition of an entropic regularization and a Sinkhorn-type iterative algorithm \cite{Cuturi,peyre2018computational}. This regularized problem, both in the discrete and the continuous versions, is equivalent to the Schr\"odinger bridge \cite{leo2,CGP2016}. When the space underlying the two measures $\mu$ and $\nu$ is continuous and the distributions are known in closed form, one can --in small dimensional problems-- discretize them on a grid or a graph before applying these techniques.
Then their solution provides a point-by-point assignment between the source and the target measures. 

However, in most data science applications, the distributions underlying the source and/or target samples are unknown. Moreover, those samples are often embedded in a high dimensional space, and the data are relatively scarce. Density estimation techniques using this scarce data will yield a poor representation of the source and target measures. Hence the transport map or transference plan provided by these techniques will be either inaccurate or highly over-fitted, which leads to a very poor predictive power for the target of new sample points from the source.

In order to provide a more flexible framework for data science applications, sample-based techniques to solve the OT problem were developed in \cite{tabak2017conditional, kuang2017sample, tabak2018explanation}. A central question to address when posing sample-based OT problems is the meaning of the push-forward condition $T_{\#} \mu = \nu$ when $\mu$ and $\nu$ are only known through samples $\left\{x_i\right\}$, $\left\{y_j\right\}$. In the formulations in \cite{tabak2017conditional, kuang2017sample, tabak2018explanation}, this condition was relaxed to the equality of the empirical means of a pre-determined set of functions or ``features'' over the two sample sets; a relaxation that appears naturally in the dual formulation of the problem. This raises the feature selection problem of finding the set of features best suited to each application. The associated challenges are particularly apparent in the change detection problem, where elements in two point clouds may differ for instance in size, color, shape, data distribution or location, may be large or small, may have appeared, disappeared, have been displaced, deformed, broken, consolidated\ldots  Thus the development of a robust, application-independent feature-selection methodology is far from trivial.

The methodology proposed in this article incorporates feature selection into the formulation of the optimal transport problem itself, through an adversarial approach. This involves three main steps:
\begin{enumerate}
\item Borrowing from the methodology developed in \cite{kuang2017sample}, we subdivide the transportation problem between $\mu$ and $\nu$ into finding $N$ local maps $T_t$ pushing forward $\rho_{t-1}$ to $\rho_{t}$, with $\rho_0 = \mu$ and $\rho_{N}=\nu$. The global map $T$ results from the composition of these local maps: $T = T_{N} \circ T_{N-1} \circ \ldots \circ T_1$, and global optimality is guaranteed by requiring that the $\rho_t$ are McCann's displacement interpolants \cite{mccann1997convexity} between $\mu$ and $\nu$.
This decomposition achieves two goals:
\begin{itemize}
  \item Because every pair of successive $\rho_t$ are close to each other, the corresponding maps $T_t$ are close to the identity, which is the gradient of the strictly convex function $\frac{1}{2} \|x\|^2$. This permits relaxing the requirement that $\phi_t$ be convex in the optimality condition $T_t = \nabla \phi_t$ for the standard quadratic transportation cost. 
  \item Arbitrarily complex maps $T$ can be built through the composition of quite simple maps $T_t$. Thus, the maps over which to optimize each local problem can be reduced to a suitable family depending on just a handful of parameters. 
\end{itemize}

\item We formulate the push-forward condition $T_{t\: \#} \rho_{t-1} = \rho_t$ not in terms of the empirical expectation of features but as the minimization of the relative entropy between $T_{t\: \#} \rho_{t-1}$ and  $\rho_t$. One advantage of this formulation is that it is a natural relaxation of the push-forward condition when $T_t$ is restricted to a small family of maps, which renders impossible the achievement of a perfect match between $T_{t\: \#} \rho_{t-1}$ and  $\rho_t$.

\item We use a variational characterization of the relative entropy, as the maximizer of a suitable functional over functions $g(x)$. This formulation has three critical properties:
\begin{enumerate}
\item Since the variational characterization involves expected values of functions over $\rho_{t-1}$ and  $\rho_t$, it can be immediately extended to a sample-based scenario, thereby, replacing those expected values by empirical means.
\item Replacing ``all'' functions $g(x)$ by a suitable family of functions provides a natural relaxation in the presence of finite sample sets. We show that, unlike the maps $T_t$, which produce the global map $T$ via composition, it is the sum of the functions $g_t$ that approximates the global $g$. Moreover, we prove that, if the families of $T_t$ and $g_t$ are built through the linear superposition of a predetermined set of functions, we recover the solution in \cite{kuang2017sample}.

\item Each local problem has now been given a minimax formulation (minimize over $T$, maximize over $g$.) This has a natural adversarial interpretation: while the ``player'' with strategy $T$ seeks to minimize the discrepancies between $T_{\#} \rho_{t-1}$ and $\rho_t$; its adversary with strategy $g$ develops features to prove that the two distributions have not been perfectly matched. This provides the desired adaptability: the user does not need to provide features adapted to the problem in hand, as these will emerge automatically from its solution. This facilitates applications across a broad range of problems, including problems with significant features at various, possibly unknown scales.

\end{enumerate}

\end{enumerate} 

This paper is organized as follows. After this introduction, section \ref{sec:AOT} describes the methodology and its theoretical underpinning. Subsection \ref{subsec:adv} introduces the variational characterization of the relative entropy that the algorithm uses and concludes with the sample-based minimax formulation of the local optimal transport problem. Subsection \ref{subsec:conn} shows that, when the functions $g$ and potentials $\phi$ are drawn from finite-dimensional linear functional spaces, the solution to the problem agrees with  the one obtained in \cite{kuang2017sample} with pre-determined features. Subsection \ref{subsec:duality} proves that the order of minimization and maximization does not matter --that is, that there is no duality gap-- and explains the intuition behind the adversarial nature of the game, by detailing how each player reacts to the other's strategy. Subsection \ref{globalalgo} integrates the local algorithm just described into a global algorithm for the full optimal transport between $\mu$ and $\nu$.

Section \ref{sec:algorithm} details the algorithm further. Subsection \ref{subsec:functionalspace} specifies the functional spaces chosen for $g$ and $\phi$, subsection \ref{subsec:localAlgorithm} the procedure used for solving the minimax problem, and subsection \ref{subsec:penalty} the additional penalization terms required for the non-linear components of the functional spaces. Finally, section \ref{sec:experiments} performs some illustrative numerical experiments, applying the new methodology to synthetic low-dimensional data. The focus of these experiments is to display in action, in easy to visualize scenarios, the adversarial nature of the formulation. 

% !TEX root = ../adaptiveot.tex

\section{Adaptive optimal transport}
\label{sec:AOT}

\subsection{Formulation of the problem: an adversarial approach}
\label{subsec:adv}

We are given two sample sets $(x_i)_{i=1,..,n}$, $(y_j)_{j=1,..,m}$ $\subset \R^d$ with $n$ and $m$ sample points respectively, independent realizations of two random variables with unknown distributions $\mu$ and $\nu$.
Both distributions are assumed to be absolutely continuous with respect to the Lebesgue measure on $\R^d$ and have finite second order moments. By a slight abuse of notation, we will identify the measures and their densities.

In this case, Brenier's theorem \cite[p. 66]{villani2003topics} guarantees the existence of a map $T$ %solution of (\ref{eq:MOT}) with a quadratic cost, 
pushing forward $\mu$ to $\nu$ and minimizing the transportation cost
\begin{equation}
\int \left\|T(x) - x\right\|^2 \mu(x) \ dx.
\end{equation}

From the samples provided, we seek a map $T$ that would perform the transport well when applied to other independent realizations of the unknown distributions $\mu,\nu$.
We can assume that the source and target distribution are close:
\begin{remark}
Solving the problem for nearby distributions is the building block of a general procedure for arbitrary distributions and for finding the Wasserstein barycenter of distributions \cite{kuang2017sample}. This more general procedure is presented in Section \ref{globalalgo}.
\end{remark}

The OT problem has two main ingredients: the push-forward condition that $(T(x_i))$ and $(y_j)$ have the same distribution and the minimization of the cost.
\begin{remark}
For the quadratic cost, the optimal solution is the gradient of a convex function $\phi(x)$, $y = T(x) = \nabla\phi(x)$, a convenient characterization. More general cost functions of the type $\ell(x-y)$ would only require modifying $\nabla \phi$ into $x - \nabla \ell^*(\nabla \phi)$, where $\ell^*$ represents the Legendre-Fenchel transform of the strictly convex function $\ell$, in the algorithm presented below.
\end{remark}
 In \cite{kuang2017sample}, the push-forward condition was formulated in terms of the equality of the empirical expected values of a pre-determined set of feature functions. Instead, we propose a broader and adaptive formulation, in terms of the relative entropy between the two distributions. This introduces some significant improvements:
\begin{enumerate}

\item Of the two characterizations of equality of distributions: that all test-functions within a broad enough class agree and that their relative entropy vanish, the latter is far more succinct and easier to enforce. 

\item Replacing ``all'' test functions by a finite set, though a sensible approximation in the presence of finite sample-sizes, leads to questions of robustness and feature selection. To address this, we will use a variational characterization of the relative entropy, which automatically selects the ``best'' features within a given class.

\item For finite sample sets, one would expect the empirical expected values of test functions on the two distributions to agree only in a statistical sense, so requiring their strict equality is somewhat artificial. By contrast, in the new formulation, rather than requiring the relative entropy to vanish, which may be unrealistic for finite sample-sizes and a limited family of maps $T$, we seek to minimize it. 

\end{enumerate}  

\begin{definition}
For two probability measures $\rho, \nu \in P(\R^d)$, the \emph{Kullback-Leibler divergence} of $\rho$ with respect to $\nu$ --also called their relative entropy-- is defined as
\begin{equation}
D_{KL}(\rho||\nu) = \int \log\left( \frac{d\rho}{d\nu}\right)d\rho
\end{equation}
if $\rho$ is absolutely continuous with respect to $\nu$ ($\rho \ll \nu$), and $+\infty$ otherwise.
\end{definition}

Solving the optimal transport problem is equivalent to minimizing a Kullback-Leibler divergence, as the following proposition shows:

\begin{proposition}\label{prop:KLmin}
Let $\mu,\nu \in P(\R^d)$, with $\mu$ absolutely continuous with respect to the Lebesgue measure $m$ on $\R^d$.

Let $\mathcal{C}$ be the set of convex functions from $\R^d \to \R$.

Define the minimization problem
\begin{equation}\label{eq:KLOPT}\tag{KLopt}
\inf_{\phi \in \mathcal{C}} D_{KL}(\nabla \phi_{\#} \mu ||\nu) 
\end{equation}
where $\nabla \phi_{\#} \mu (A) = \mu((\nabla \phi)^{-1}(A))$ \footnote{$\nabla \phi$ is well defined $m$-a.e. by Theorem 25.5 \cite{rockafellar1970Convex}, and hence $\mu$-a.e.}, for any Borel measurable set $A$.

Then there exists a unique minimizer $\phi$ (up to zero measure sets), which coincides with the minimizer of the 2-Wassertein distance between $\mu$ and $\nu$:
\[
\phi = \arg \inf_{\psi \in \mathcal{C}}D_{KL}(\nabla \psi_{\#} \mu ||\nu)
\]
and
\[
W_2^2(\mu,\nu) = \int |\nabla \phi(x)-x|^2 d\mu(x).
\]
\end{proposition}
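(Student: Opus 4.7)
The proof should reduce to three off-the-shelf facts: Brenier's theorem (both existence and uniqueness), Gibbs' inequality for the Kullback-Leibler divergence, and the Brenier formula for $W_2^2$. My plan is to show that the infimum in \eqref{eq:KLOPT} equals zero and is attained precisely at the Brenier potential, so that the two optimisation problems literally share their minimiser.

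First I would invoke Brenier's theorem. Because $\mu$ is absolutely continuous with respect to Lebesgue measure and both measures have finite second moments, there exists a convex function $\phi^{\star}:\mathbb{R}^d\to\mathbb{R}$, unique up to an additive constant, such that $\nabla\phi^{\star}_{\#}\mu = \nu$, and $\phi^{\star}$ realises $W_2^2(\mu,\nu)=\int\|\nabla\phi^{\star}(x)-x\|^2\,d\mu(x)$. Plugging this candidate into the KL functional gives $D_{KL}(\nabla\phi^{\star}_{\#}\mu\,\|\,\nu)=D_{KL}(\nu\|\nu)=0$, so the infimum of \eqref{eq:KLOPT} is attained and equals zero.

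Next I would show that any minimiser must be a Brenier potential. By Gibbs' inequality (or equivalently Jensen applied to $-\log$), $D_{KL}(\rho\|\nu)\geq 0$ with equality if and only if $\rho=\nu$. Hence any $\phi\in\mathcal{C}$ with $D_{KL}(\nabla\phi_{\#}\mu\,\|\,\nu)=0$ must satisfy $\nabla\phi_{\#}\mu=\nu$. The uniqueness half of Brenier's theorem then forces $\nabla\phi=\nabla\phi^{\star}$ $\mu$-almost everywhere, so $\phi$ agrees with $\phi^{\star}$ up to a constant. This gives the claimed uniqueness of the minimiser and identifies it with the minimiser of the 2-Wasserstein cost; the final identity for $W_2^2$ is then just Brenier's formula applied to $\phi$.

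The statement is essentially a packaging of Brenier's theorem, so I do not expect a serious obstacle. The only point that requires a moment's care is verifying that Brenier's convex potential really lies in the class $\mathcal{C}$ used here (finite-valued convex functions on $\mathbb{R}^d$) and that $\nabla\phi_{\#}\mu$ is well defined for all $\phi\in\mathcal{C}$ via the almost-everywhere differentiability of convex functions (cited from Rockafellar in the footnote). Once those measurability issues are dispatched, the argument is three lines of bookkeeping around the equality case of Gibbs' inequality.
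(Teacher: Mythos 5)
Your proposal is correct and follows essentially the same route as the paper: both arguments combine Brenier's theorem (existence and uniqueness of the convex potential with $\nabla\phi_{\#}\mu=\nu$) with the nonnegativity of $D_{KL}$ and its equality case (which the paper derives via Jensen's inequality for $x\mapsto x\log x$, i.e.\ exactly Gibbs' inequality) to show the infimum is zero, attained only at the Brenier potential. No substantive difference.
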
 

\begin{proof}
By Brenier's theorem, there exists a unique minimizer $\phi$ (up to zero measure sets) for the 2-Wassertein problem. The potential $\phi$ is a proper lower semi-continuous convex function and $\nabla \phi_{\#} \mu = \nu$.
One easily sees that $\phi$ is the minimizer for the Kullback-Leibler divergence optimization problem \eqref{eq:KLOPT}, since for any measure $\rho \in P(\R^d)$ one has, 
\begin{equation}\label{eq:DKL}
D_{KL}(\rho||\nu) \geq 0
\end{equation}
with equality if and only if $\rho = \nu$ almost everywhere (in $\nu$).

Inequality \eqref{eq:DKL} is easy to prove: if $\rho$ is not absolutely continuous w.r.t. $\nu$, the Kuklback-Leibler divergence is infinite, so the statement is true. Otherwise, we have
\begin{align*}
D_{KL}(\rho ||\nu) &= \int \log\left( \frac{d\rho}{d\nu}\right)d\rho\\
 &= \int \log\left( \frac{d\rho}{d\nu}\right) \frac{d\rho}{d\nu} d\nu \\
 &\geq \left( \int \frac{d\rho}{d\nu} d\nu \right) \log\left(\int \frac{d\rho}{d\nu} d\nu \right) = 0,
\end{align*}
where we used Jensen's inequality and the convexity of $x \mapsto x \log(x)$.

So the equality $D_{KL}(\rho||\nu)=0$ will hold if and only if Jensen's inequality becomes an equality, i.e. if and only if $\frac{d\rho}{d\nu} \equiv 1$, or $\rho = \nu$.

In particular, the solution to the optimal transport problem satisfies $\nabla \phi_{\#} \mu = \nu$. Hence
\[
D_{KL}(\nabla \phi_{\#} \mu||\nu) = 0,
\]
which shows that $\phi$ is a minimizer of the optimal transport problem and \eqref{eq:KLOPT}.

As for uniqueness, let $\phi_1, \phi_2 \in \mathcal{C}$ be two minimizers. Then $\nabla {\phi_1}_{\#} \mu = \nabla {\phi_2}_{\#} \mu = \nu$ from the statement above. By Brenier's theorem, they both solve the quadratic cost optimal transportation problem, which has a unique solution up to zero measure sets.
\end{proof}

Recently there has been a push in machine learning to replace the Kullback-Leibler divergence by Wasserstein distances in order to penalize differences in data sets \cite{frogner2015learning, peyre2018computational}. Unlike the Kullback-Leibler divergence, the Wasserstein distance defines a proper distance, enjoys regularity and symmetry properties, and is computationally tractable. 
Nonetheless, the Kullback-Leibler divergence is well suited to measure the dissimilarities between measures that we are trying to detect. In particular, the asymmetry between the two measures under the Kullback-Leibler divergence is well within the spirit of the problem, as we seek a convex function $\phi$ that makes the transported distribution $\nabla \phi_{\#} \mu$ indistinguishable from the target reference $\nu$.
Also, as we shall see, the minimization of the relative entropy captures the differences between the two sample sets far more deftly than does a predefined finite set of test functions.%A good transport map $\phi$ should make it impossible to discern whether points from $\nabla \phi(x_i)$ stemmed from the distribution underlying the sample $(y_j)$ or not.

Thus, the biggest drawback in using the Kullback-Leibler divergence appears to be the difficulty in its numerical evaluation, particularly when we do not have access to a closed form expression for $\mu$ and $\nu$, but merely to a finite set of independent samples from each of these distributions.
One could resort to density estimation techniques \cite{sheather1991reliable,silverman2018density} to approximate $\mu$ and $\nu$ and then proceed to numerical integration.
Instead, we use a variational characterization of the Kulback-Leibler divergence of $\rho$ with respect to $\nu$, in the form of a sample-friendly expression :
\begin{proposition}\label{prop:KLvar}
Let $\rho, \nu \in P(\R^d)$. Then
\[
D_{KL}(\rho||\nu) = 1 + \sup_{g} \left\lbrace \int g d\rho - \int e^g d\nu \right\rbrace
\]
\end{proposition}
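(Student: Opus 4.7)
My plan is to prove the two inequalities separately: the supremum is bounded above by $D_{KL}(\rho||\nu) - 1$, and this bound is attained by an explicit maximizer, modulo care in the non-absolutely-continuous case.

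First, I would dispatch the case $\rho \not\ll \nu$, where $D_{KL}(\rho||\nu) = +\infty$ by definition. Here I would exhibit a sequence of test functions that drives the right-hand side to $+\infty$. Since $\rho \not\ll \nu$, there exists a measurable set $A$ with $\nu(A) = 0$ and $\rho(A) > 0$; taking $g_n = n \mathbf{1}_A$ (or a continuous approximation) gives $\int g_n \, d\rho - \int e^{g_n} d\nu = n\rho(A) - 1 \to +\infty$, matching both sides.

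For the main case $\rho \ll \nu$, I would set $f = d\rho/d\nu$ and rewrite the functional as
\begin{equation*}
\int g \, d\rho - \int e^g d\nu = \int \bigl(g f - e^g\bigr) d\nu.
\end{equation*}
The heart of the argument is then a pointwise inequality: for every $g$ (wherever $f > 0$) and with the convention $0 \log 0 = 0$,
\begin{equation*}
g f - e^g \le f \log f - f.
\end{equation*}
To see this, I would substitute $t = e^g / f$ (on $\{f > 0\}$), so that the inequality reduces to $f\bigl(\log t - (t-1)\bigr) \le 0$, which follows from the elementary bound $\log t \le t - 1$. On $\{f = 0\}$ the inequality reads $-e^g \le 0$, which is trivial. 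Integrating against $\nu$ yields
\begin{equation*}
\int g \, d\rho - \int e^g d\nu \le \int f \log f \, d\nu - \int f \, d\nu = D_{KL}(\rho||\nu) - 1,
\end{equation*}
giving the upper bound on the supremum.

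For equality, the pointwise inequality $\log t \le t - 1$ is tight precisely at $t = 1$, i.e.\ when $e^g = f$. I would therefore propose the candidate $g^{\ast} = \log f = \log(d\rho/d\nu)$, plug it in, and verify directly that $\int g^{\ast} d\rho - \int e^{g^{\ast}} d\nu = D_{KL}(\rho||\nu) - 1$. I expect the only genuine obstacle to be specifying the admissible class of $g$ so that both integrals make sense and $g^{\ast}$ can be attained (or at least approached by an admissible sequence), since $\log f$ may fail to be bounded or continuous; this is typically handled by allowing all measurable $g$ with $\int e^g d\nu < \infty$, or by a truncation-and-limit argument approximating $g^{\ast}$ by bounded functions and passing to the limit via monotone/dominated convergence.
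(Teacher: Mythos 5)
Your proof is correct and follows essentially the same route as the paper's: dispatch $\rho \not\ll \nu$ with indicator functions on a $\nu$-null, $\rho$-positive set, then establish the pointwise bound $gf - e^g \le f\log f - f$ for $f = d\rho/d\nu$ and integrate, with equality at $g = \log f$. The only differences are cosmetic (the paper derives the pointwise inequality by concavity in $g$ rather than via $\log t \le t-1$), and your extra care about the set $\{f=0\}$ and the admissibility of the unbounded maximizer is a slight refinement of what the paper does.
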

over all Borel measurable functions $g: \R^d \to \R$.
\begin{proof}
If we do not have $\rho \ll \nu$, there exists a set $A \subset \R^d$ such that $\rho(A)>0$ and $\nu(A) = 0$.
Then 
\[
1 +\sup_{g} \left\lbrace \int g d\rho - \int e^g d\nu \right\rbrace 
\]
is infinite, as it can be made arbitrarily large by picking functions of the type $g = c \mathbbm{1}_A$, $c \in \R$.
$D_{KL}(\rho||\nu)$ is also infinite in this case. Hence their values agree.

When $\rho \ll \nu$, notice that for $\nu$-almost every $x \in \R^d$, 
\[
g \in \R \mapsto g \frac{d \rho}{d \nu}(x) - e^g 
\]
is concave and maximized for $g(x) = \log \left( \frac{d\rho}{d\nu}(x) \right)$ (note that the Radon-Nikodym derivative $\frac{d\rho}{d\nu}$ is non-negative, $\nu$-a.e.). 

Thus, for almost every $x \in \R^d$ and any choice of $g(x) \in \R$, we have:
\[
1 + g(x)\frac{d \rho}{d \nu}(x) - e^{g(x)} \leq 1 + \frac{d\rho}{d\nu}(x)  \left[ \log \left( \frac{d\rho}{d\nu}(x) \right) - 1 \right]
\]
with equality if and only if $g(x) = \log \left( \frac{d\rho}{d\nu}(x) \right)$.

Integrating over the measure $\nu$ yields
\[
1 + \int_{\R^d} \left(g(x)\frac{d \rho}{d \nu}(x) - e^{g(x)} \right) d\nu(x) \leq \int_{\R^d}  \log \left( \frac{d\rho}{d\nu}(x) \right) d\rho(x) = D_{KL}(\rho||\nu)
\]
and, thus, one has 
\[
1 + \sup_{g} \left\lbrace \int_{\R^d}  g(x) d \rho(x) - \int_{\R^d} e^{g(y)} d \nu(y) \right\rbrace = D_{KL}(\rho||\nu) 
\]
since we have equality for 
\[
g = \log \left( \frac{d\rho}{d\nu} \right) \quad \text{ on the support of } \nu.
\]
\end{proof}

\begin{remark}
\begin{enumerate}
\item The variational reformulation of the Kullback-Leibler divergence is a consequence of the convexity of $x \mapsto - \log(x)$.
Indeed, computing its Legendre-Fenchel transform twice yields: 
\[
- \log(x) = \sup_{y<0} \left\lbrace x y + 1 -  \log\left(-\frac{1}{y} \right) \right\rbrace  = \sup_{g \in \R} \{ g - x e^g \} + 1 
\]
This approach extends to a broader set of f-divergences, yielding similar variational formulations, see \cite{nguyen2010estimating} and \cite{nowozin2016f}.

\item A very similar variational formulation was developed in \cite{suzuki2008approximating} to estimate the likelihood of two samples being generated from independent sources. 

\item Note that the variational formulation represented above is very similar to the Donsker-Varadhan \cite{donsker1975asymptotic} formula:
\[
\sup_{g} \left\lbrace \int_{\R^d}  g(x) d \rho(x) - \log \left(\int_{\R^d} e^{g(y)} d \nu(y) \right) \right\rbrace
\]
Indeed, $\log(x) \leq x -1$ yields:
\[
\sup_{g} \left\lbrace \int_{\R^d}  g(x) d \rho(x) - \int_{\R^d} e^{g(y)} d \nu(y)  \right\rbrace + 1
 \leq \sup_{g} \left\lbrace \int_{\R^d}  g(x) d \rho(x) - \log \left(\int_{\R^d} e^{g(y)} d \nu(y) \right) \right\rbrace
\]
and equality is achieved for the same maximizer $g = \log\left(\frac{d\rho} {d\mu}\right)$, if $\rho \ll \nu$ (otherwise, they are both infinite). The formula in Proposition~\ref{prop:KLvar} can be considered as a linearization of the Donsker-Varadhan formula, easier to implement numerically.
\end{enumerate}
\end{remark}

Given two random variables $Z \sim \rho$ and $Y \sim \nu$ with $\rho \ll \nu$, we can equivalently express the formula in Proposition \ref{prop:KLvar} as:
\[
D_{KL}(\rho||\nu) = 1 + \max_{g} \left\lbrace \E[g(Z)] - \E[e^{g(Y)}] \right\rbrace
\]

If instead, we are given  \emph{independent samples} $z_1,...,z_n$ of $Z$, and $y_1,..,y_m$ of $Y$, we can approximate the above reformulation by its empirical counterpart:
\[
D_{KL}(\rho||\nu) \approx 1 + \max_{g} \left\lbrace  \frac{1}{n} \sum_i g\left(z_i\right) - \frac{1}{m} \sum_j e^{g(y_j)} \right\rbrace
\]
where the maximization is sought over a suitable class of functions $g$. Theorem 1 of \cite{nguyen2010estimating} shows that if this class of functions
\begin{enumerate}
\item contains the optimizer $g^* = \log \left( \frac{d \rho}{d \nu} \right)$,
\item satisfies the envelope conditions \cite{nguyen2010estimating}[16a, 16b] (e.g. $g$ is bounded),
\item satisfies the entropy conditions \cite{nguyen2010estimating}[17a, 17b] (e.g. Sobolev spaces $\mathcal{W}^{k,2}$ on a compact space),
\end{enumerate}
then we have Hellinger consistency of this estimator, that is 
\begin{equation}\label{eq:Hellinger}
\int \left( \sqrt{\exp(g^*)} - \sqrt{\exp(g_{n,m})} \right)^2 d\nu \xrightarrow[n,m \to +\infty]{} 0
\end{equation} 
where $g_{n,m} = \arg\max \left\lbrace  \frac{1}{n} \sum_i g\left(z_i\right) - \frac{1}{m} \sum_j e^{g(y_j)} \right\rbrace$.

%In the remainder of the paper, we seek to solve the optimal transport problem between a source measure $\mu$ and a target measure $\nu$, both assumed to be absolutely continuous with respect to the Lebesgue measure $m$. 
%
%The source samples $x_1,...,x_n$ are assumed to be independently sampled from a random variable $X \sim \mu$, and similarly the target samples $y_1,..,y_m$ are independent realizations of a random variable $Y \sim \nu$.

We deduce from Propositions \ref{prop:KLmin} and \ref{prop:KLvar} the following reformulation of the optimal transport problem between $\mu$ and $\nu$, under a quadratic cost, expressed as a minimax problem:
\begin{problem}[Minimax reformulation]\label{minimax}
\begin{equation*}
\min_{\phi} \max_{g} L[\phi,g] \equiv \E\left[g(\nabla \phi(X))\right] - \E\left[e^{g(Y)}\right]
\end{equation*}
\end{problem}
Note that the \emph{Lagrangian} $L$ is concave in the maximization variable $g$, but not necessarily convex in the minimization variable $\phi$.

The sample-based version of Problem \ref{minimax} is given by:
\begin{problem}[Sample based minimax reformulation]\label{sampleMinimax}
\begin{equation*}
\min_{\phi} \max_{g} L[\phi,g] \approx \min_{\phi} \max_{g} \left\lbrace \frac{1}{n} \sum_i g\left(\nabla\phi(x_i) \right) - \frac{1}{m} \sum_j e^{g(y_j)} \right\rbrace
\end{equation*}
\end{problem}
over \emph{suitable} function spaces for $\phi(x)$ and $g(y)$, as detailed in Section \ref{sec:algorithm}.

This is an adversarial setting, in which the player with strategy $\phi$ attempts to minimize the discrepancies between the distributions underlying the sample sets $\left\{\nabla \phi(x_i)\right\}$ and $\left\{y_j\right\}$, while the player with strategy $g$ attempts to show that the two distributions are in fact different. Thus $g$ would point to those areas where the two distributions differ the most, and $\phi$ would correct those discrepancies. We will see this competition in action in the examples in section \ref{sec:experiments}.

This saddle point optimization problem is reminiscent of the ones encountered in the Generative Adversarial Networks (GAN) literature \cite{nowozin2016f}. Broadly speaking, a GAN learns how to generate a sample from an unknown distribution. To do so, a two-player game is introduced; a parameterized \emph{generator} $Q$ aims to produce samples as `close' as possible to the samples in the training set. This is quantified by the use of an f-divergence (e.g. Kullback-Leibler, Jensen-Shannon, or `GAN' divergence), which is given a variational formulation in the exact same way as it is done in Proposition~\ref{prop:KLvar}. This in turns introduces a \emph{discriminator}, whose role is to prove that the generator has not done the right job.

Formulated as such, our optimization problem is quite similar to a GAN. Indeed, the generator $Q$ is a distribution which is usually induced by the pushforward of a generic distribution (e.g. standard Gaussian) by a map $T$. This map, as well as the discriminator, are calibrated using neural networks. This is well within the spirit of the method we use to generate the optimal transport map, as well as the function $g$ (see Section \ref{globalalgo}).

The main differences with the algorithm presented in \cite{nowozin2016f} and ours are:
\begin{enumerate}
\item Our map $T$ is restricted to the form $\nabla \phi$ where $\phi$ is convex, in order to solve the quadratic Wasserstein problem. To our knowledge, there are no restrictions on the map in the GAN problem,
\item We use a variational formulation of the Kullback-Leibler divergence instead of the `GAN' divergence,
\item Instead of using a batch gradient descent for the optimization algorithm, we proceed to what we call `implicit gradient descent', which is described in Section~\ref{subsec:localAlgorithm}.
\item Although our method of generating the map $T$ and `discriminant' $g$ proceed to a sum or composition of many non-linear maps, we do not directly use neural networks.
\end{enumerate}

\subsection{Connection with the pre-determined features case}
\label{subsec:conn}
In \cite{kuang2017sample}, a set of `features' $f_1,..,f_K$ serve as test functions to evaluate the statement $\rho = \nu$ for $\rho,\nu \in P(\R^d)$, when we only have sample points $(z_i)_{i=1,..,n}$ and $(y_j)_{j=1,..,m}$ generated from $Z \sim \rho$, $Y \sim \nu$.
As in \cite{kuang2017sample}, we will assume that $\mu,\nu$ are `close'. The general case with more distant measures can be reduced to the solution of many local problems, as shown in Algorithm \ref{algo:TOT} below, also borrowed from \cite{kuang2017sample}.

\begin{definition}
The samples $(z_i)_{i=1,..,n}$ and $(y_j)_{j=1,..,m}$ generated from random variables $Z \sim \rho$, $Y \sim \nu$ are equivalent for the set of features $f_1,..,f_K$ if
\[
\frac{1}{n} \sum_{i=1}^n f_k(z_i) = \frac{1}{m} \sum_{j=1}^m f_k(y_j), \quad \forall k=1,..,K
\]
\end{definition}

The definition above is a relaxation of the equivalence $\mu = \nu \Leftrightarrow \E[f(Z)] = \E[f(Y)]$ for all test functions $f \in C_b(\R^d)$.
Then solving the transport problem between the samples $(x_i)$ and $(y_j)$ is reduced to finding a map $T$ such that $(T(x_i))_i$ is equivalent to $(y_j)$, for the features $f_1,..,f_K$. 

In \cite{kuang2017sample}, $T$ is chosen to be of the type :
\[
T(x) = \nabla \phi(x) = x + \sum_{k} \alpha_k \nabla \phi_k(x)
\]
for some pre-determined functions $\phi_1,..,\phi_K$ and constants $\alpha_1,...,\alpha_K$. In fact, the potentials $\phi_k$ adopted in \cite{kuang2017sample} agree with the features $f_k$, but our proposition below applies to more general choices.
It shows that the procedure to solve the sample-based optimal transport problem with pre-determined features is a particular instance of Problem \ref{sampleMinimax}. A specific choice of functional space for $g$ will yield this result. Before introducing it, we need a set of compatibility conditions for the choices of possible $\phi$ and $g$.

\begin{definition}
The features $f_k, \: k=1,..,K$ are said to be compatible with the potentials $\phi_k, \: k=1,..,K$ for the sample $(x_i)_{i=1,..,n}$, if the matrix $C \in \R^{K \times K}$ defined as  
%\[
%C_{kk'} = \frac{1}{n}  \sum_{i=1}^n \nabla \phi_{k}(x_i) \cdot \nabla f_{k'}\left(x_i + \sum_{l=1}^K  \alpha_l \nabla \phi_l(x_i)\right)
%\]
\[
C_{kk'} = \frac{1}{n}  \sum_{i=1}^n \nabla \phi_{k}(x_i) \cdot \nabla f_{k'}(x_i)
\]
is non-singular.%, $\forall \alpha_1,..,\alpha_K$ in a neighborhood of $0 \in \R^K$.
\end{definition}

This compatibility assumption essentially guarantees the non-degeneracy of the choice of functions, as it restricts the average displacement to affect the features in an independent fashion. 
It can be summarized by the requirement that $C = \E[J_{\phi} J_f^{\top}]$ is non-singular, where $J_{\phi}, J_f$ are the Jacobian matrices of $\phi,f$.

\begin{proposition}\label{predeterminedFeatures}
Given a compatible set of features $f_1,..,f_K$ and potentials $\phi_1,..,\phi_K$ for the sample $(x_i)_{i=1,..,n}$, consider Problem \ref{sampleMinimax} using the functional spaces:
\[
g(z) = \sum_{k=1}^K \beta_k f_k(z), \quad \phi(x) = \frac{|x|^2}{2} + \sum_{k=1}^K \alpha_k \phi_k(x) % \Rightarrow T(x) = \nabla \phi(x) = x + \sum_{k=1}^K \alpha_k \nabla \phi_k(x)
\]
for $\beta \in \R^K$, $\alpha \in \R^K$ in a small-enough neighborhood of zero.

Then the optimizer $\phi$ of Problem \ref{sampleMinimax} for two sample sets close to each other solves the sample-based optimal transport problem with predetermined features; meaning that $(\nabla \phi(x_i))$ is equivalent to $(y_j)$ for the features $f_1,..,f_K$. :
\[
\frac{1}{n} \sum_{i=1}^n f_k(\nabla \phi(x_i)) = \frac{1}{m} \sum_{j=1}^m f_k(y_j), \quad \forall k=1,..,K
\]

\end{proposition}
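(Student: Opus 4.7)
The plan is to analyze the minimax value directly rather than solve the joint saddle--point equations. Substituting the parameterization, write $\nabla\phi(x) = x + \sum_k \alpha_k \nabla\phi_k(x)$ and $g_\beta(z) = \sum_k \beta_k f_k(z)$, so that the Lagrangian becomes
\[
L(\alpha,\beta) \;=\; \frac{1}{n}\sum_{i=1}^{n} g_\beta(\nabla\phi(x_i)) \;-\; \frac{1}{m}\sum_{j=1}^{m} e^{g_\beta(y_j)}.
\]
The pivotal observation is the identity $L(\alpha,0) = -1$ for every $\alpha$, which yields the baseline $\min_\alpha \max_\beta L(\alpha,\beta) \ge -1$.

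Next I would show that any $\alpha^*$ satisfying the feature--matching equations attains this bound. By linearity of $g_\beta$ in $\beta$, feature matching at $\alpha^*$ forces
\[
\frac{1}{n}\sum_i g_\beta(\nabla\phi(x_i)) \;=\; \frac{1}{m}\sum_j g_\beta(y_j),
\]
so the Lagrangian reduces to $\frac{1}{m}\sum_j \bigl(g_\beta(y_j) - e^{g_\beta(y_j)}\bigr)$, which is at most $-1$ by the pointwise scalar inequality $u - e^u \le -1$ (with equality iff $u = 0$). Hence $\max_\beta L(\alpha^*,\beta) = -1$, attained at $\beta = 0$. Conversely, if $\alpha$ violates feature matching, then the $k$--th coordinate of $\nabla_\beta L(\alpha,0)$, namely $\tfrac{1}{n}\sum_i f_k(\nabla\phi(x_i)) - \tfrac{1}{m}\sum_j f_k(y_j)$, is nonzero for some $k$. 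Since $L(\alpha,\cdot)$ is concave in $\beta$, a small step in that coordinate strictly raises $L$ above $-1$, so such $\alpha$ cannot be a minimizer of the outer problem.

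What remains is producing a feature--matching $\alpha^*$ inside the prescribed neighborhood of zero. Here the compatibility hypothesis enters: the Jacobian at $\alpha = 0$ of the residual map
\[
F(\alpha) \;=\; \Bigl(\tfrac{1}{n}\textstyle\sum_i f_k(\nabla\phi(x_i)) \;-\; \tfrac{1}{m}\textstyle\sum_j f_k(y_j)\Bigr)_{k=1,\ldots,K}
\]
is, up to transposition, the matrix $C$, which is invertible by assumption. When the source and target samples are close the initial residual $\|F(0)\|$ is small, so the implicit function theorem yields $\alpha^*$ with $F(\alpha^*) = 0$ and $\|\alpha^*\|$ controlled by $\|F(0)\|$; this $\alpha^*$ therefore lies in the prescribed neighborhood once the samples are close enough.

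The step I expect to be the main obstacle is precisely this last one: quantifying ``sample sets close to each other'' well enough for the implicit function theorem to apply within the preassigned $\alpha$--neighborhood, and simultaneously choosing the $\beta$--neighborhood small enough that the concave--maximization FOC argument is unambiguous. The algebraic core of the proof is then just the scalar convex bound $u - e^u \le -1$ combined with the linearity of $g_\beta$ in $\beta$.
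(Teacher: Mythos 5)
Your argument is correct, and it takes a genuinely different route from the paper's. The paper works purely with first-order stationarity: it computes $\nabla_\alpha L = C(\alpha)\beta$, invokes compatibility to conclude that $C(\alpha)$ is nonsingular for $\alpha$ near zero, deduces $\beta=0$ from $\nabla_\alpha L = 0$, and then reads off the feature-matching equations from $\nabla_\beta L = 0$ evaluated at $\beta=0$. You instead analyze the value function $D(\alpha)=\max_\beta L(\alpha,\beta)$ directly: the baseline $L(\alpha,0)=-1$, the scalar bound $u-e^u\le -1$, and the linearity of $g_\beta$ in $\beta$ together show that $D(\alpha)=-1$ exactly when $\alpha$ is feature-matching and $D(\alpha)>-1$ otherwise, so the minimizers of the outer problem are \emph{precisely} the feature-matching $\alpha$'s. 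This buys you something the paper's proof does not: you characterize the full argmin set rather than only showing that an interior critical point of the saddle problem satisfies feature matching, and you use the compatibility hypothesis only where it is genuinely needed, namely to produce (via the invertibility of the Jacobian $C^{\top}$ of your residual map $F$ and the smallness of $F(0)$ for close sample sets) a feature-matching $\alpha^*$ inside the prescribed neighborhood. The paper, by contrast, uses compatibility to force $\beta=0$ and is somewhat vaguer on existence, remarking only that closeness and compatibility ``guarantee that this problem has a solution with a small $\alpha$'' --- essentially the same inverse-function-theorem step you flag as the delicate one, so your treatment of that point is if anything more explicit than the paper's. The one detail worth stating carefully in your writeup is that both the equality $\max_\beta L(\alpha^*,\beta)=-1$ (attained at $\beta=0$, which lies in the admissible set) and the strict inequality for non-matching $\alpha$ (obtained by a small admissible step along the nonzero gradient $\nabla_\beta L(\alpha,0)$) survive the restriction of $\beta$ to a small neighborhood of zero; you do address this, so there is no gap.
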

\begin{proof}
%$$ g = \sum_{k=1}^K \beta_k f_k(x), \quad \psi = \sum_{k=1}^K \alpha_k f_k(x). $$
The Lagrangian $L$ as a function of $\alpha,\beta$ is given by 
\begin{equation*}
\frac{1}{n} \sum_i \left[\sum_{k=1}^K \beta_k f_k\left(x_i + \sum_{l=1}^K \alpha_l \nabla \phi_l(x_i) \right) \right] 
  - \left[\frac{1}{m} \sum_j e^{\sum_{k=1}^K \beta_k f_k(y_j)}\right]
\end{equation*}
Taking the first order conditions at optimality yields:
$$ \nabla_{\alpha}  L = C(\alpha) \beta, \quad \text{ where } C(\alpha)_{kk'} =  \frac{1}{n} \sum_i \left[\nabla \phi_k(x_i)\cdot\nabla f_k\left(x_i + \sum_{l=1}^K \alpha_l \nabla \phi_l(x_i) \right)\right], $$

Since $\alpha$ is in a neighborhood of zero, the matrix $C(\alpha)$ is a small perturbation of the non-singular matrix $C$. Since features and potentials are compatible, the matrix $C$ is non-singular, and, thus, $C(\alpha)$ is non-singular itself. Hence

$$ \nabla_{\alpha} L = 0 \Rightarrow \beta = 0. $$

Moreover, the second optimality condition evaluated at $\beta = 0$ yields $\forall k$:
$$ \partial_{\beta_k} L =  \frac{1}{n} \sum_i f_k\left(x_i + \sum_{l=1}^K \alpha_l \nabla \phi_l(x_i) \right) 
- \frac{1}{m} \sum_j f_k(y_j)$$

Hence $\nabla_{\beta} L = 0$ at $\beta = 0$ implies that 
\[
\frac{1}{n} \sum_i f_k\left(x_i + \sum_{l=1}^K \alpha_l \nabla \phi_l(x_i) \right) 
= \frac{1}{m} \sum_j f_k(y_j)
\]
Notice that the closeness of the two sample sets and the compatibility between the potential and features guarantee that this problem has a solution with a small $\alpha$ (in fact, this can be taken as a feature-dependent characterization of what it means for two sample sets to be close to each other).
This result means that the empirical expected values of the $f_k$ agree on $\left\{T(x_i)\right\}$ and $\left\{y_j\right\}$, i.e. the samples are equivalent for the features $f_1,...,f_K$. Hence $T=\nabla \phi$ solves the sample-based optimal transport problem with pre-determined features.
%
%In words, when $g$ and the $\psi$ are linear combinations of the same functions $f_t$, the solution is the same as in the prior algorithm with specified features $f_t$. Interesting connection! 
%
%When the features for $g$ and $\psi$ are different, but in the same number, the situation is similar, with the expected values of the features for $g$ agreeing on $\left\{T(x^i)\right\}$ and $\left\{y^j\right\}$.
\end{proof}

Note that we are restricting the maps $\nabla \phi$ to be `small' perturbations of the identity, by choosing $\alpha$ in a neighborhood of $0$. This is because the optimal transport procedure will only be applied to measures or samples that are `close' to each other. %More details can be found in Section \ref{globalalgo}.

In this paper, we will allow $g$ to be more general than a simple linear combination of features, thus greatly expanding the procedure in \cite{kuang2017sample}. This added flexibility yields better adaptability to the most important characteristics of the data.

\subsection{Duality}
\label{subsec:duality}
\subsubsection{No duality gap}
Given the Lagrangian $L$ introduced in Problem \ref{minimax}, the primal objective functional to minimize is, according to Proposition \ref{prop:KLvar}:
\begin{equation}\label{eq:primal}
D[\phi] = \max_g L[\phi,g] = D_{KL} \left( \nabla \phi_{\#}\mu || \nu \right) - 1
\end{equation}
The proof in Proposition \ref{prop:dualitygap} shows that the dual objective functional to be maximized is:
\begin{equation}\label{eq:dual}
d[g] = \min_{\phi} L[\phi,g] = \left(\min_{y \in \R^d} g(y) \right) - \mathbb{E} \left[ e^{g(Y)} \right] 
\end{equation}
A desired property of the adversarial game, defined by the formulation in Problem \ref{minimax}, is the absence of an irreversible advantage or penalty a player gets from playing first.
In other words we do not want a duality gap. This is the content of the following proposition:
\begin{proposition}[Absence of duality gap]\label{prop:dualitygap}
\[
\min_{\phi} \max_{g} L[\phi,g] = \min_{\phi} D[\phi]  = \max_{g} d[g] =  \max_{g} \min_{\phi} L[\phi,g]
\]
\end{proposition}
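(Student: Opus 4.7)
The plan is to compute both extremal values of $L$ explicitly, show each equals $-1$, and then appeal to weak duality to collapse the remaining chain. Weak duality, $\sup_g d[g] \le \inf_\phi D[\phi]$, is automatic from the pointwise bound $L[\phi,g] \le \max_{g'} L[\phi,g']$ valid for every $(\phi, g)$; minimizing over $\phi$ on the left and taking the supremum over $g$ on both sides preserves the inequality. So it suffices to prove $\inf_\phi D[\phi] = -1$ and $\sup_g d[g] = -1$ independently.

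For the primal side, the identity $D[\phi] = D_{KL}(\nabla\phi_{\#}\mu \,\|\, \nu) - 1$ from Proposition~\ref{prop:KLvar}, combined with Proposition~\ref{prop:KLmin} (which guarantees that the Brenier potential $\phi_\star$ realizes $\nabla\phi_{\star\#}\mu = \nu$ and hence $D_{KL}(\nabla\phi_{\star\#}\mu \,\|\, \nu) = 0$), yields $\inf_\phi D[\phi] = D[\phi_\star] = -1$.

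For the dual side, I would first derive the closed form \eqref{eq:dual}. The only $\phi$-dependent term in $L[\phi,g]$ is $\mathbb{E}[g(\nabla\phi(X))]$; trivially $\mathbb{E}[g(\nabla\phi(X))] \ge \inf_{y\in\R^d} g(y)$, and this bound is saturated by the linear potentials $\phi(x) = y_0 \cdot x$, which are convex with constant gradient $y_0$, reducing the expectation to $g(y_0)$. Ranging $y_0$ over $\R^d$ gives $\inf_\phi \mathbb{E}[g(\nabla\phi(X))] = \inf_y g(y)$ and hence $d[g] = \inf_y g(y) - \mathbb{E}[e^{g(Y)}]$. Writing $m = \inf_y g(y)$, $h = g - m \ge 0$, and $M = \mathbb{E}[e^{h(Y)}] \ge 1$, one has $d[g] = m - e^m M$; this concave function of $m$ is maximized at $m = -\log M$ with value $-\log M - 1 \le -1$. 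Equality holds for $g \equiv 0$, which gives $m = 0$, $M = 1$, and $d[0] = -1$. Therefore $\sup_g d[g] = -1$, and combined with weak duality and the primal computation, all four quantities in the proposition coincide at $-1$.

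The main obstacle is the reduction $\inf_\phi \mathbb{E}[g(\nabla\phi(X))] = \inf_y g(y)$: one needs to observe that linear maps are admissible convex potentials and that their constant gradients exhaust $\R^d$, so the infimum over all convex $\phi$ is already realized on this subclass. Once that identification is in hand, the remaining dual optimization is elementary one-variable calculus in $m$, and the primal value is read off directly from Propositions~\ref{prop:KLmin} and~\ref{prop:KLvar}.
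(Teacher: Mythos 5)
Your proof is correct and follows essentially the same route as the paper: the primal value $-1$ is read off from Propositions~\ref{prop:KLmin} and~\ref{prop:KLvar}, and the dual value $-1$ comes from reducing $\inf_\phi \mathbb{E}[g(\nabla\phi(X))]$ to $\inf_y g(y)$ via linear potentials and then optimizing the resulting one-variable expression. Your explicit invocation of weak duality and the $d[g]=m-e^m M$ parametrization (versus the paper's replacement of $g$ by the constant $g_{\min}$) are only cosmetic variations.
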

\begin{proof}
From Proposition \ref{prop:KLmin}, we know that 
\[
\min_{\phi} D_{KL}(\nabla \phi_{\# }\mu||\nu) = 0
\]
with the minimizer reached for the solution of the transport problem. 

Hence we get in Equation \eqref{eq:primal}
\[
\min_{\phi} \max_{g} L[\phi,g] = \min_{\phi} D[\phi] = -1
\]

On the other hand, maximizing Equation \eqref{eq:dual} yields:
\[
 \max_{g} \min_{\phi} L[\phi, g] = \max_g \left\lbrace \min_{\phi}  \mathbb{E}[g(\nabla \phi(X))] - \mathbb{E}\left[e^{g(Y)}\right]  \right\rbrace
\]
Note that the inner minimum is reached for the convex function $\phi(x) = y_{min} \cdot x$ where $\min_{y} g(y) = g(y_{min}) \equiv g_{min}$.

In the case where the minimum of $g$ is not reached, take a minimizing sequence $y_{min}^{n}$ such that $g(y_{min}^n) \to \inf_{y \in \R^d} g(y) \equiv g_{min}$. Then a minimizing sequence for the inner minimum in $\phi$ is given by $\phi^n(x) = y_{min}^n \cdot x$.

In both cases,
\[
\min_{\phi}  \mathbb{E}[g(\nabla \phi(X))] = g_{min}
\]
We are, thus, left with maximizing the dual problem
\[
\max_g d[g] = \max_g \left\lbrace g_{min} - \mathbb{E} \left[ e^{g(Y)} \right]  \right\rbrace
\]
Since $\mathbb{E}\left[e^{g(Y)}\right] \geq e^{g_{min}}$, we can always choose $g$ to be the constant function $g_{min}$. We are then left with maximizing
\[
\max_{g_{min}} g_{min} - e^{g_{min}}
\]
which is achieved for $g \equiv g_{min} = 0$. Hence we also have that 
\[
\max_g d[g] = \max_{g} \min_{\phi}L(\phi, g)  = -1
\]
\end{proof}
%
%In conclusion, a choice of $g$ that is a linear combination of features reduces the problem to the pre-determined features case.
%
%A more general choice of $g$ will hopefully automatically detect the best features adapted to specific samples $(x_i)$ and $(y_j)$.

\subsubsection{An adversarial view of duality}
The optimality conditions for the minimax problem are given by
\[
\begin{cases}
\nabla \phi \text{ moves mass to where } g \text{ is smallest } \\
g(y) = \log \left( \frac{\nabla \phi_{\#} \mu (y)}{ \nu(y)} \right)
\end{cases}
\]
Examining the primal and dual problems in light of these conditions explains the behavior of the competing players $\phi$ and $g$:
\begin{itemize}
\item Given a function $g$, $\phi$ will try to move mass from the areas where $g$ is large (i.e. $\nabla \phi_{\#} \mu (y) \ge \nu(y)$) to those where $g$ is small (i.e. $\nabla \phi_{\#} \mu (y) \le \nu(y)$). Following this strategy allows this player to minimize the impact of $g$ on the Lagrangian.
\item Given a function $\phi$, $g$ will adapt to get closer to the function $\log \left( \frac{\nabla \phi_{\#} \mu(y)}{ \nu(y)} \right)$, which is large where mass is lacking ($\nabla \phi_{\#} \mu (y) \ge \nu(y)$) and vice-versa. Following this strategy, allows the second player to increase the Lagrangian by focusing on those areas where the push-forward condition has not been fully achieved.
\end{itemize}
The game concludes when $g$ becomes constant (necessarily 0) on the support of the distributions. Then $\phi$ does not need to move mass anymore, as it then receives no new directive from $g$.

\subsection{Global algorithm} \label{globalalgo}
%The minimax formulation in Problem \ref{sampleMinimax} represents the building block for solving the transport problem between two measures $\mu$ and $\nu$.
%\begin{proposition}\label{prop:localminimax}
%{\change ***Local version of Proposition 1***}
%\end{proposition}
%
%{\change ***Present the global algorithm and show convergence***}
One could attempt to directly use a procedure based on Problem \ref{sampleMinimax} to solve the OT problem for any samples $(x)_i$ and $(y)_j$.
Such direct approach, however, would not be universally efficient for the following reasons:
\begin{itemize}
\item If the distributions underlying $(x)_i$ and $(y)_j$ are considerably different, one would require a very rich family of potentials to build a $\phi$ that can perform an accurate transfer.
\item One would also require a rich functional space from which to draw $g$ in order to properly characterize all significant differences in the two data samples.
\item Depending on the parametrization of $\phi$ and $g$, the Lagrangian can be non-convex in the variables parametrizing $\phi$, and non-concave in the variables parametrizing $g$. With distributions that are far apart, this could make the numerical solution depend on the initialization of those parameters.
\item The condition that $\phi$ is a convex function is typically hard to enforce. For nearby distributions, on the other hand, it is satisfied automatically, as $\phi(x)$ is close to the convex potential $\frac{1}{2} \|x\|^2$ corresponding to the identity map. 
\end{itemize}

For these reasons, we will solve multiple local optimal transport problems, instead of one global one.
More precisely, we will apply Algorithm \ref{algo:TOT}, adapted from Algorithms 2 and 7 in \cite{kuang2017sample}.
\begin{algorithm}
    \caption{Theoretical Global Optimal Transport Algorithm (TGOT)}
    \label{algo:TOT}
    \begin{algorithmic}[0] % The number tells where the line numbering should start
        \Procedure{TGOT}{$\mu,\nu$} 
        	%\State
        	\State $\triangleright$ \textit{Step 1: Initialize intermediate nodes}
        	\State $N \gets$ number of intermediary steps
        	\State $\rho_0 \gets \mu, \quad \rho_T \gets \nu$
        	\For{$t=1,..,N-1$}
        		\State $\rho_t \gets \frac{N-t}{N}\mu+\frac{t}{N}\nu$ \Comment{ or any arbitrary measure} 
        	\EndFor
        	%\State
        	\While{\textit{not converged}}
        		\State $\triangleright$ \textit{Step 2: Forward step}
        		\For{$t=1,..,N$}
        			\State Solve the optimal transport problem between $\rho_{t-1}$ and $\rho_t$, as defined in Problem \ref{minimax}. It yields an `local' optimal map $\nabla \phi_t$.
        		\EndFor
        		\State $\nabla \phi \gets \nabla \phi_N \circ \nabla  \phi_{N-1} \circ \dots \circ \nabla  \phi_1$
        		%\State
        		\State $\triangleright$ \textit{Step 3: Backward step}
        		\For{$t=1,..,N-1$}
        			\State $\rho_t \gets (\frac{N-t}{N}Id + \frac{t}{N} \nabla \phi)_{\#} \mu$
        		\EndFor
        	\EndWhile  	
            \State \textbf{return} $\nabla \phi$
        \EndProcedure
    \end{algorithmic}
\end{algorithm}
Theorem 2.4 in \cite{kuang2017sample} proves the convergence of Algorithm \ref{algo:TOT} to the solution of the OT problem.
%\FloatBarrier

In Algorithm \ref{algo:TOT}, the forward step consists of solving multiple, small, optimal transport problems, addressed in Section \ref{subsec:localAlgorithm}. The backward step back-propagates the final sample computed in the forward pass to all the intermediate samples using McCann's displacement interpolants.

This procedure, reminiscent of the neural networks of machine learning with their ``hidden layers''  replaced by local optimal transport problems, introduces several advantages:
\begin{itemize}
\item The global solution will be obtained by composition of the local maps: 
\begin{equation}\label{eq:compMaps}
\nabla \phi = \nabla \phi_N \circ \nabla  \phi_{N-1} \circ \dots \circ \nabla  \phi_1
\end{equation}
Hence one can choose a small family of maps to solve each local optimal transport problem, and still span a rich family of maps for the global displacement.
%\item One can express the optimal $g$ in the variational formulation of $D_{KL}(\nu || \mu)$ using the optimality condition, as:
%\begin{equation}\label{eq:compG}
%\log\left( \frac{\nu}{\mu} \right) = \sum_{t=0}^{N-1} \log\left( \frac{\rho_{t+1}}{\rho_t} \right)
%\end{equation}
%where each $\log\left( \frac{\rho_{t+1}}{\rho_t} \right)$ is the optimal `local $g$' for the local optimal transport problem%\footnote{we set here $\rho_0=\mu, \rho_N = \nu$}
%. The above expression shows that, similarly to $\nabla \phi$, one can also choose a poor family of solutions for the `local $g$', since the `global $g$' will be obtained by addition. For instance, one could design each local $g$ as a bump function, hence decomposing the global $g$ into a superposition of bumps.

Note that in our two-player game, we would theoretically have at optimality $T_{\#} \mu = \nu$ and hence the optimal $g$ would be equal to $\log(T_{\#} \mu / \nu) = 0$.
\item If $\rho_t$ and $\rho_{t+1}$ are close, the local OT problem has a solution $\nabla \phi$ that is a small perturbation of the identity, i.e. the gradient of a strictly convex potential.  Starting from the identity, the numerical algorithm will explore a small neighborhood around it. If the solution that we seek is in this neighborhood, convexity will be preserved. 
\end{itemize}

The global algorithm for finding the optimal map between two distributions known through the samples $(x_i)$ and $(y_j)$ is summarized in Algorithm \ref{algo:SBOT}.
\begin{algorithm}
    \caption{Sample Based Global Optimal Transport Algorithm (SBGOT)}
    \label{algo:SBOT}
    \begin{algorithmic}[0] % The number tells where the line numbering should start
        \Procedure{SBGOT}{$(x_i),(y_j)$} 
        	%\State
        	\State $\triangleright$ \textit{Step 1: Initialize intermediate nodes}
        	\State $N \gets$ number of intermediary steps
        	\State $z_0 \gets x, \quad z_N \gets y$
        	\For{$t=1,..,N-1$}
        		\State $z_{t,i} \gets \frac{N-t}{N}x_i+\frac{t}{N} y_{\sigma(i)}$ \Comment{ for some $\sigma:\{1,..,n\} \to \{1,..,m\}$ (or any arbitrary samples)} 
        	\EndFor
        	%\State
        	\While{\textit{not converged}}
        		\State $\triangleright$ \textit{Step 2: Forward step}
        		\For{$t=1,..,N$}
        			%\State Use Algorithm \ref{algo:SBLOT} to solve the optimal transport problem between $z_{t-1}$ and $z_t$, as defined in Problem \ref{sampleMinimax}. Get a target sample $\tilde{z}_t = \nabla \phi_t (z_{t-1})$ where $\nabla \phi_t$ is the `local' optimal map.
        			\State $z_t \gets SBLOT(z_{t-1},z_t)$ \Comment{see Algorithm \ref{algo:SBLOT}}
        			%\State $z_t \gets \tilde{z}_t$ 
        		\EndFor

        		%\State
        		\State $\triangleright$ \textit{Step 3: Backward step}
        		\For{$t=1,..,N-1$}
        			\State $z_t \gets \frac{N-t}{N}x + \frac{t}{N}z_N$
        		\EndFor
        	\EndWhile  	
            \State \textbf{return} $z_N$
        \EndProcedure
    \end{algorithmic}
\end{algorithm}
Algorithm \ref{algo:SBLOT} in Section \ref{sec:algorithm} further details the procedure to solves the sample based local Optimal Transport problem.% based on Problem \ref{sampleMinimax}.
% !TEX root = ../adaptiveot.tex

\section{Algorithm}\label{sec:algorithm}
In order to complete the description of the algorithm proposed, we need to specify the functional spaces from which $g$ and $\phi$ are drawn and the procedure used for solving the minimax problem for of the Lagrangian $L(g,\phi)$.

\subsection{Choice of functional spaces}\label{subsec:functionalspace}
Since any two consecutive distributions $\mu,\nu$ in the procedure are close to each other, the optimal map is a perturbation of the identity.
The potential $\phi$ will, thus, be chosen in the form:
\begin{equation}
  \phi(x) = \frac{1}{2} \|x\|^2 + \psi(x)
\end{equation}
where $\psi$ has a Hessian with a spectral radius less than $1$.
No such centering is required for $g(x)$, as at optimality $g(x) = \log\left(1 \right) = 0.$

One basic capability that one should require of the functional spaces for $g$ and $\phi$ is that of detecting and correcting global displacements and scaling --not necessarily isotropic -- between two distributions. Thus one should have
\[
\phi(x) = \frac{1}{2} x^{\top}(I+A_0)x + a_1 \cdot x + \phi_{nl}(x)
\]
and
\[
g(z) = \frac{1}{2} z^{\top} B_0 z +b_1 \cdot z  + b_2 + g_{nl}(z),
\]
where $A_0,B_0$ are symmetric matrices in $\R^{d\times d}$, $a_1, b_1$ are vectors in $\mathbb{R}^d$, $b_2 \in \R$ is a scalar, and $\phi_{nl}$ and $g_{nl}$ stand for additional non-linear features discussed below.
The quadratic polynomial in $\phi$ allows for global translations and dilations. Correspondingly, the quadratic polynomial in $g$ allows for the detection of any mismatch in the mean and co-variance of the two distributions. One can easily check that, with these basic functions available, the procedure yields the exact solution to the optimal transport problem between arbitrary Gaussians.

If these are the only features available, then there is no advantage in dividing the global problem into local ones, as the composition of linear maps is also linear, thereby providing no additional richness to the single step scenario. The natural element to add is an adaptive feature that could perform --and detect the need of-- local mass displacements. In one dimension, a natural choice is provided by one or more Gaussians of the form
$$ \phi_{nl}^k = \alpha_k \exp \left( - \frac{[v_k (x - \bar{x}_k)]^2}{2} \right), 
\quad g_{nl}^k = \beta_k \exp \left( - \frac{[s_k (z - \bar{z}_k)]^2}{2} \right), $$  
where the index $k$ labels the Gaussian feature when more than one is used.
The Gaussians in $\phi$ allow for local stretching/compression around $m$ with scale $|v|^{-1}$ and amplitude $\alpha$, while each Gaussian in $g$ detects local discrepancies between the two distributions, as opposed to the global scale and positioning provided by its quadratic component. The parameters $v$, $\bar{x}$, $s$ and $\bar{z}$ appear nonlinearly in $\phi$ and $g$, moving us away from the linear feature spaces of \cite{kuang2017sample} and into the realm of adaptability, as the parameters automatically select the location and scale of the changes required by the data.

There are at least four alternative ways to bring these Gaussian features to higher dimensions:
\begin{enumerate}
  \item Adopt general Gaussians of the form
  $$ \phi_{nl} = \alpha \exp \left( - \frac{\|V (x - \bar{x})\|^2}{2} \right) , $$
  with $\bar{x}$ a vector and $V$ a matrix (it is more convenient to write the Gaussian in terms of a general matrix $V$ in this way, rather than in terms of the inverse covariance matrix $C^{-1} = V^T V$, as we would need to require this to be positive definite);
  
  \item adopt isotropic Gaussians
  $$ \phi_{nl} = \alpha \exp \left( - \frac{v \|x - \bar{x}\|^2}{2} \right) , $$
  with $v$ a scalar,
  
  \item adopt one-dimensional Gaussians along arbitrary directions
  $$ \phi_{nl} = \alpha \exp \left( - \frac{\|v\cdot(x - \bar{x})\|^2}{2} \right) , $$
  with $v$ a vector, and
  
  \item adopt a Gaussian with diagonal covariance
  $$ \phi_{nl} = \alpha \exp \left( - \frac{\|D (x - \bar{x})\|^2}{2} \right) , $$
  with $D$ a diagonal matrix,
\end{enumerate} 
and similarly for $g_{nl}$ in all four cases. The first choice has the advantage of generality but may be prone to overfitting in high dimensions, unless it is severely penalized. The second approximates a general function $\phi$ by the composition of isotropic bumps, an appropriate image is that of hammering a sheet of metal into any desired shape. Yet, it would resolve poorly local, one-dimensional changes. The third choice excels at these but will fare poorly for more isotropic local changes. Finally, the fourth choice is attached to the coordinate axes, which would make sense only if these correspond to variables that are assumed to change independently.

A natural question is how many Gaussians to include in the functional space proposed. We have used two in the examples below, but one Gaussian would have sufficed: in the adversarial multistep method proposed, it is enough that the player with strategy $g(y)$ has a ``lens'' (the Gaussian) to identify the area where the two distributions least agree, and the player with strategy $\phi(x)$ has the capability to perform local moves to correct this misfit. Since the center and width of the Gaussian are free parameters, both assertions hold. With a single Gaussian feature, both players can focus only on one local misfit at a time. However, the algorithm has multiple steps, so effectively the total number of features available is the product of the features per step times the number of steps.

\subsection{Local Algorithm}\label{subsec:localAlgorithm}
We will use vectors $\alpha \in \mathbb{R}^a, \beta \in \mathbb{R}^b$ to parametrize $\phi(x) = \phi_{\alpha}(x)$ and $g(y) = g_{\beta}(y)$.
We are seeking to solve the minimax problem in $\alpha \in \R^a, \beta \in \R^b$ for the Lagrangian:
\[
L[\alpha,\beta] = \frac{1}{n} \sum_{i=1}^n g_{\beta}(\nabla \phi_{\alpha}(x_i)) - \frac{1}{m} \sum_{j=1}^m e^{g_{\beta}(y_j)} + P(\alpha,\beta)
\]
where $P$ is a penalization function that will be described in Section \ref{subsec:penalty}.

In practice, one could use any available minimax solver to find a critical point of the above Lagrangian. Yet, to our knowledge, there is no available efficient method suitable for a non-convex/non-concave landscape.

A naive algorithm would simultaneously implement gradient descent in $\alpha$ and  gradient ascent in $\beta$, with updates given at each step $s$ by:
\begin{align*}
\alpha^{s+1} &= \alpha^s - \eta \nabla_{\alpha}L[\alpha^s, \beta^s] \\
\beta^{s+1} &= \beta^s + \eta \nabla_{\beta}L[\alpha^s, \beta^s],
\end{align*}
with a step size $\eta$ that may change at each iteration. From a game-theory perspective, this corresponds to two myopic players that plan their next move based only on their current position, without anticipating what the other player might do.

%This algorithm might lead to `circular' updates; actions of $\alpha$ will be cancelled by actions of $\beta$ and vice-versa, as the following example shows;
%
%\begin{example}
%Consider the Lagrangian
%\[
%L[\alpha,\beta] = \frac{1}{2} (\alpha^2 - \beta^2)
%\]
%for $\alpha,\beta \in \R$. Then the gradient descent algorithm yields the updates
%\begin{align*}
%\alpha^{n+1} &= (1- \eta) \alpha^n  \\
%\beta^{n+1} &= (1- \eta) \beta^n
%\end{align*}
%\end{example}

Instead, more insightful players will choose their next move based on the future position of their opponents. This yields a second order algorithm, that we will refer to as \emph{implicit} gradient descent, with updates given by:
\begin{align*}
\alpha^{s+1} &= \alpha^s - \eta \nabla_{\alpha}L[\alpha^{s+1}, \beta^{s+1}]  \\
\beta^{s+1} &= \beta^s + \eta \nabla_{\beta}L[\alpha^{s+1}, \beta^{s+1}] .
\end{align*}
A simple Taylor expansion gives: 
\begin{align*}
\nabla_{\alpha}L[\alpha^{s+1}, \beta^{s+1}] &\approx  \nabla_{\alpha}L^s + \nabla^2_{\alpha \alpha} L^s \cdot (\alpha^{s+1}- \alpha^s) + \nabla^2_{\alpha \beta} L^s \cdot (\beta^{s+1}- \beta^s) \\
\nabla_{\beta}L[\alpha^{s+1}, \beta^{s+1}] &\approx  \nabla_{\beta}L^s + \nabla^2_{\alpha \beta} L^s \cdot (\alpha^{s+1}- \alpha^s) + \nabla^2_{\beta \beta} L^s \cdot (\beta^{s+1}- \beta^s)  
\end{align*}
Defining the \emph{twisted} gradient $G^s$ and \emph{twisted} Hessian $H^s$ by
\[
G^s = \begin{pmatrix} \nabla_{\alpha} L^s \\ - \nabla_{\beta} L^s \end{pmatrix}, \quad H^s = \begin{pmatrix} \nabla^2_{\alpha \alpha} L^s & \nabla^2_{\alpha \beta} L^s \\ - \nabla^2_{\alpha \beta} L^s & -\nabla^2_{\beta \beta} L^s \end{pmatrix}
\]
and $\gamma^s =\begin{pmatrix}   \alpha^s \\ \beta^s \end{pmatrix}$, one obtains the second-order updating scheme:
\begin{equation}
\gamma^{s+1} = \gamma^s - \eta \left(I + \eta H^s \right)^{-1} G^s
\end{equation}
Notice that as $\eta \to 0$, the scheme is equivalent to a classical gradient descent. On the other hand, as $\eta \to +\infty$, the scheme converges to Newton iterations.

At each iteration, we are allowed to update $\eta$ in order to accelerate convergence. Ongoing research \cite{minimax2018essid} addresses the correct rules to update $\eta$, as well as the convergence of the algorithm to a critical point of the Lagrangian.
This minimax solver is robust in two senses: it guarantees both convergence to a local minimax point and constant improvement. The latter has to do with the subtlety of minimax problems, as opposed to regular minimization where enforcing a decrease of the objective function is enough. In each step of our implicit procedure to $\min_x \max_y L(x,y)$, if $L[x^{s+1}, y^{s+1}]$ is either bigger than $L[x^s, y^{s+1}]$ or smaller than $L[x^{s+1}, y^s]$, we reject the step and adopt a smaller learning rate. Because of this, the solution will always improve over the starting identity map. 
If computing the twisted Hessian $H$ becomes too costly, one can resort to Hessian approximation techniques such as BFGS or its variations \cite{wright1999numerical,pavon2017variational}.

To conclude, the algorithm for finding the optimal match between two consecutive distributions, which we denote sample based local optimal transport (SBLOT), is summarized in Algorithm \ref{algo:SBLOT}.%, modulo updating rules in $\eta$.

\begin{algorithm}
    \caption{Sample Based Local Optimal Transport Algorithm (SBLOT)}
    \label{algo:SBLOT}
    \begin{algorithmic}[0] % The number tells where the line numbering should start
        \Procedure{SBLOT}{$(x_i),(y_j)$} 
        	\State Initialize $\gamma$
        	\State Compute the twisted gradients and Hessians $G, H$
			\For{$n=1,..,MaxIter$}
				\If{$||G|| < tolerance$}
					\State break
				\EndIf
				\State $\gamma \gets \gamma - \eta (I + \eta H)^{-1} G$ 
				\State Recompute the twisted gradients and Hessians $G, H$ at $\gamma$
				\State Update $\eta$
			\EndFor	
            \State \textbf{return} ${\displaystyle \nabla \phi_{\gamma[1:a]}(x)}$
        \EndProcedure
    \end{algorithmic}
\end{algorithm}

%{\change *** Erase what is below***}\\
%Then the gradient with respects to the variables $\alpha, \beta$ is given by:
%\begin{align*}
%\partial_{\alpha_k} L &= \frac{1}{n} \sum_i \nabla_y g_{\beta}(\nabla_x \phi_{\alpha}(x_i)) \cdot \partial_{\alpha_k} \nabla_x \phi(x_i) \\
%\partial_{\beta_n} L &= \frac{1}{n} \sum_i \partial_{\beta_n} g_{\beta}(\nabla_x \phi_{\alpha}(x_i)) - \frac{1}{m} \sum_{j=1}^m \partial_{\beta_n} g_{\beta}(y_j) e^{g_{\beta}(y_j)}
%\end{align*}
%The Hessian is given by:
%\begin{align*}
%\partial_{\alpha_k \alpha_l} L &= \frac{1}{n} \sum_i \nabla_y g_{\beta}(\nabla_x \phi_{\alpha}(x_i)) \cdot \partial_{\alpha_k \alpha_l} \nabla_x \phi(x_i) + [\partial_{\alpha_l} \nabla_x \phi(x_i)]^T \nabla_y^2 g_{\beta} (\nabla_x \phi_{\alpha}(x_i)) \partial_{\alpha_k} \nabla_x \phi(x_i) \\
%\partial_{\alpha_k \beta_n} L &= \frac{1}{n} \sum_i \partial_{\beta_n}\nabla_y g_{\beta}(\nabla_x \phi_{\alpha}(x_i)) \cdot \partial_{\alpha_k} \nabla_x \phi(x_i) \\
%\partial_{\beta_n \beta_m} L &= \frac{1}{n} \sum_i \partial_{\beta_n \beta_m} g_{\beta}(\nabla_x \phi_{\alpha}(x_i)) - \frac{1}{m} \sum_{j=1}^m \left[ \partial_{\beta_n} g_{\beta}(y_j)\partial_{\beta_m} g_{\beta}(y_j)+ \partial_{\beta_n \beta_m} g_{\beta}(y_j)\right]e^{g_{\beta}(y_j)}
%\end{align*}

\subsection{Penalization}\label{subsec:penalty}
Transforming Problem \ref{minimax} into Problem \ref{sampleMinimax} amounts to replacing the theoretical measures with their empirical estimates;
\[
\rho \approx \hat{\rho} = \frac{1}{n} \sum_{i=1}^n \delta_{\nabla \phi(x_i)}, \quad \nu \approx \hat{\nu} = \frac{1}{m} \sum_{j=1}^m \delta_{y_j}
\]
Even if $\rho \ll \nu$, this will not hold for their estimates. Allowing maximum freedom for the function $g$ will result in an infinite Kullback-Leibler divergence. For instance, if one allows functions $g$ with support including some $\nabla \phi(x_i)$ but none of the $y_j$, the Lagrangian will grow unboundedly, since the exponential term that regularly inhibits this growth is now constant. One way to avoid this problem is to use the relative entropy not between $T(X)$ and $Y$ but between $T(X)$ and $(1-\epsilon) Y + \epsilon T(X)$, as then the law of $T(X)$ is always absolutely continuous w.r.t. the law of $(1-\epsilon)Y + \epsilon T(X)$, eliminating the possibility of blowup in $g$, and the minimum is still reached when $T(X)=Y$.
%It is thus necessary to choose a functional space $g$ satisfying the following condition: if $z \in Support(\nabla \phi_{\#} \mu)$, then there exists a neighborhood $U$ of size $\epsilon = \max_i \min_j || \nabla \phi(x_i) - y_j ||$ containing $z$, such that $g$ does not vanish almost everywhere in $U$. %{\bf I think that this is too strong a condition, impossible to satisfy for functions $g$ that change sign.}
%
Another general simple way to avoid this kind of scenario is through the addition to the Lagrangian of terms that penalize overfitting. For our particular choice of functional spaces, it is only the coefficients in the argument of the exponentials that require penalization, as those are the only ones than involve spatial scales. In particular, for a component of $g$ or $\phi$ of the form
$$ a e^{-(b \cdot (x-c))^2} , $$
we add penalization terms proportional to
$$ e^{(\epsilon \|b\|)^2}, $$
with $\epsilon$ as defined above, to avoid resolving scales smaller than $\epsilon$, to 
$$ \frac{1}{(D\|b\|)^2}, $$
where $D$ measures the diameter of the support of the data, to avoid having Gaussians so broad that they are indistinguishable from the quadratic components of the functional space, to
$$ \left\|\frac{c}{D}\right\|^2,$$
to avoid centering the Gaussian away from the data, and, when more that one Gaussian is used, to
$$ \frac{\epsilon^2}{\|c_i-c_j\|^2}, $$
for every pair $(i,j)$ of Gaussians, to avoid possible degeneracies in the functional space when two Gaussians become nearly indistinguishable. 

All these terms are added and multiplied by a tunable parameter $\lambda$. Yet one more consideration is required for the penalization of the parameters of the potential $\phi$: since in the Lagrangian, $\phi$ appears only as an argument of $g$, for a fixed $\lambda$, the penalization terms and the core Lagrangian can easily become unbalanced. In particular, at the exact solution, $g$ is zero, so only the penalization terms will remain. To correct for such imbalance, we multiply the corresponding penalization terms by the average value of $\|\nabla g\|$ over all current $\nabla\phi(x_i)$.

% !TEX root = ../adaptiveot.tex

\section{Experiments}\label{sec:experiments}

This section illustrates the algorithm through some simple examples. First we use a one-dimensional example --simplest for visualization-- and a direct solver between initial and final distributions to display the way in which the function $g$ adapts, creating features that point to those areas where transport in still deficient, thus guiding $\phi$ to correct them. The two distributions in the first example are relatively close, so that they can be matched without involving intermediate distributions. A second set of one-dimensional examples follows, involving more significant changes and hence requiring the use of interpolated distributions.  Then we perform some  two-dimensional examples, involving Gaussians, Gaussian mixtures and a distribution uniform within an annulus. Finally, we use an example built so that we know the exact answer, to perform an empirical analysis of convergence. All the examples presented are intended for illustration and use synthetic data; applications to real data, particularly to change detection, will be presented in field-specific articles currently under development.

\subsection{Adversarial behavior of $\phi$ and $g$}

This section shows, through a simple experiment, the competitive behavior exhibited by the two players $\phi$ and $g$ in the local algorithm (Algorithm \ref{algo:SBLOT}). To this end, we create data where the initial and final distribution are not very far from each other, so that the local algorithm can be used as a stand alone routine. More specifically, we map one single Gaussian distribution to a Gaussian mixture, where the two components of the mixture overlap significantly, so that they do not differ too markedly from the source.

\begin{figure}[h]
\begin{minipage}{0.32\textwidth}
\centering
\includegraphics[width=\textwidth]{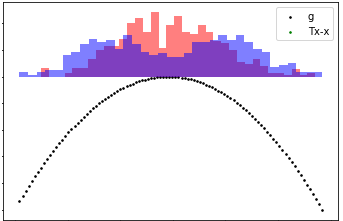}\\
Iteration 0
\end{minipage}
\begin{minipage}{0.32\textwidth}
\centering
\includegraphics[width=\textwidth]{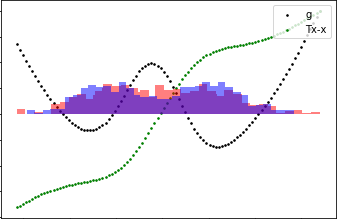}\\
After $8$ iterations
\end{minipage}
\begin{minipage}{0.32\textwidth}
\centering
\includegraphics[width=\textwidth]{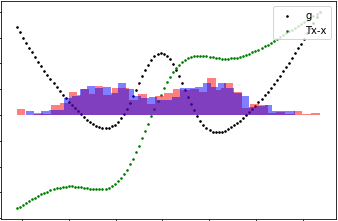}\\
After $17$ iterations
\end{minipage}
\caption[Plot at three different iteration times of Algorithm \ref{algo:SBLOT}]{Plot at three different iteration times of Algorithm \ref{algo:SBLOT}. Histograms of the source samples and their transforms are in red, and of the target samples in blue. The black curve corresponds to $g(x)$, vertically rescaled for visualization. The green curve represents the displacement $T(x)-x$.}
\label{fig:Adversarial}
\end{figure}

Figure \ref{fig:Adversarial} shows steps in the solution to the corresponding sample based OT problem, with the source samples $(x)_i$ from a Gaussian --and their transforms-- in red and the samples $(y)_j$ from a mixture of two Gaussians in blue.
Point samples are represented through histograms. The figure on the left represents the initial configuration, the one in the middle the configuration after 10 iterations of Algorithm \ref{algo:SBLOT}, and the one on the right the final configuration.

On top of the histograms, we display the function $g(x)$ in black, scaled vertically to be in the interval $[-1;1]$ for easier comparison with the data, and the displacement $\nabla \phi(x)-x$ in green,  representing the map that sends the initial sample (in red, in the left figure) to the current sample (in red, in the middle or right figure).

The initial displacement, being $0$, was not represented at initialization, but we initialize the function $g(z)$ at the purely quadratic function:
\begin{equation}\label{eq:ggauss}
\frac{1}{2} z^T \left( \hat{\Sigma}_y^{-1} - \hat{\Sigma}_x^{-1} \right) z + \left( \hat{\Sigma}_x^{-1} \hat{x} - \hat{\Sigma}_y^{-1} \hat{y} \right)^T z + \frac{1}{2} \left( \hat{y}^T \hat{\Sigma}_y^{-1} \hat{y} - \hat{x}^T \hat{\Sigma}_x^{-1} \hat{x}\right)
\end{equation}
where $\hat{x}, \hat{y}$ are the empirical means of the samples $(x)_i,(y)_j$, and $\hat{\Sigma}_x,\hat{\Sigma}_y$ their empirical covariance matrices.
Equation \ref{eq:ggauss} represents the optimal $g$ for two Gaussian measures. More generally, starting with this expression as the initial guess for $g$ instructs $\phi$ to shift the samples as well as to stretch/compress them, in order to match the first and second moments of the two distributions.

The left image of Figure \ref{fig:Adversarial} shows how $g$ highlights the lack of variance in $(x)_i$; its maximum is at $0$, and it has smaller values at the edges. This forces $\phi$ to adapt accordingly, by applying a linear map to stretch $(x)_i$. When the variance of the $(\nabla \phi(x))_i$ exceeds the variance of the $(y)_j$, the shape of $g$ is inverted.

In the middle image of Figure \ref{fig:Adversarial}, we can see that $\nabla \phi$ corrected the mismatches highlighted by $g$ and even started to slightly separate the mass in the middle. However, there is still too much red mass around $0$ and too little red mass around the two peaks of the blue Gaussian mixture. 
This is well detected by $g$, which has a local maximum within the area of red mass excess and two local minima within the area of red mass default.
In the right image of Figure \ref{fig:Adversarial}, we observe that $\nabla \phi$ adapted accordingly and starts yielding satisfactory results. At this point, $g$ is very close to $0$ ($||g||_{\infty}\sim 10^{-5}$), although this is not apparent in the figure due to the normalization we applied for plotting.

\subsection{The global algorithm in dimension one}

Figures \ref{fig:gto2g} and \ref{fig:gto3g} represent inputs and outputs of Algorithm \ref{algo:SBOT}, where $(x)_i$ is sampled from a Gaussian and $(y)_j$ from a mixture of two and three Gaussians respectively.
\begin{figure}[h]
\begin{center}
\begin{minipage}{0.35\textwidth}
\centering
\includegraphics[width=\textwidth]{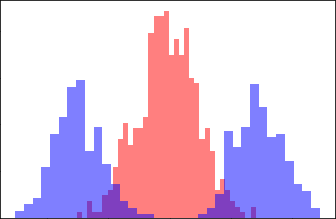}\\
Starting configuration
\end{minipage}
\begin{minipage}{0.35\textwidth}
\centering
\includegraphics[width=\textwidth]{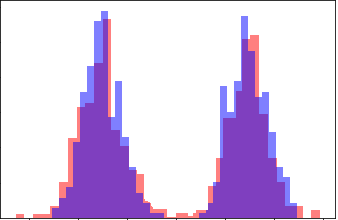}\\
Final configuration
\end{minipage}
\end{center}
\caption{Algorithm \ref{algo:SBOT} pushing forward a Gaussian to a mixture of two Gaussians, in 1D. The source samples and their transforms are depicted through histograms in red, and the target samples in blue.}
\label{fig:gto2g}
\end{figure}

\begin{figure}[h]
\begin{center}
\begin{minipage}{0.35\textwidth}
\centering
\includegraphics[width=\textwidth]{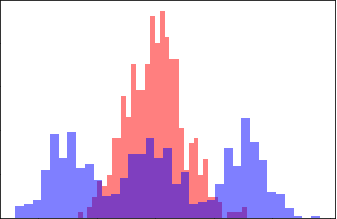}\\
Starting configuration
\end{minipage}
\begin{minipage}{0.35\textwidth}
\centering
\includegraphics[width=\textwidth]{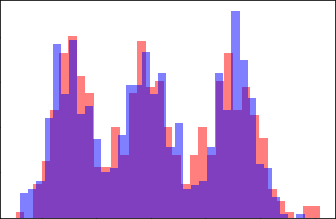}\\
Final configuration
\end{minipage}
\end{center}
\caption{Same as figure \ref{fig:gto2g} but with a mixture of three Gaussians as target.}
\label{fig:gto3g}
\end{figure}

These results were obtained by generating $\sim 200$ samples for the source and target measures, and using the functional spaces defined in Section \ref{subsec:functionalspace}  in the local algorithm (Algorithm \ref{algo:SBLOT}), with a general quadratic form for both $\phi$ and $g$, plus one adaptive Gaussian for $\phi$ and two for $g$. A total of $N=10$ and $N=20$ intermediary measures were adopted for the first and second example, respectively. As one can see, even though each local map can only perform one local deformation, the composition of many creates all the complexity required to move one single Gaussian to a mixture of two or three.

\subsection{Two-dimensional examples}

Switching to two dimensions, 
Figure \ref{fig:gtoa} represents the results of mapping a Gaussian distribution to a uniform distribution within an annulus.

An isotropic Gaussian was used for $\phi_{nl}$ and two for $g_{nl}$ in the functional space of Algorithm \ref{algo:SBLOT}, and $N=30$ intermediary distributions were used in Algorithm \ref{algo:SBOT}. 
%This type of non-linearity is well adapted to these radially symmetric distributions.
%
Figure \ref{fig:gtoadisp} represents the displacement interpolants at $t=k/5$ for $ k=1,\ldots ,5$, obtained from running Algorithm \ref{algo:SBOT} on the example in Figure \ref{fig:gtoa}. In addition to mass spreading from the isotropic Gaussian, the linear and quadratic part of $\phi$ translated and stretched the red sample accordingly.

\begin{figure}[hb!]
\begin{center}
\begin{minipage}{0.35\textwidth}
\centering
\includegraphics[width=\textwidth]{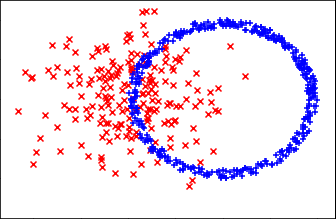}\\
Starting configuration
\end{minipage}
\begin{minipage}{0.35\textwidth}
\centering
\includegraphics[width=\textwidth]{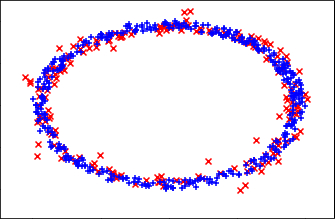}\\
Final configuration
\end{minipage}
\end{center}
\caption{Algorithm \ref{algo:SBOT} from a displaced Gaussian to an annulus, in 2D}
\label{fig:gtoa}
\end{figure}

\begin{figure}[hb!]
\begin{center}
\includegraphics[scale=0.6]{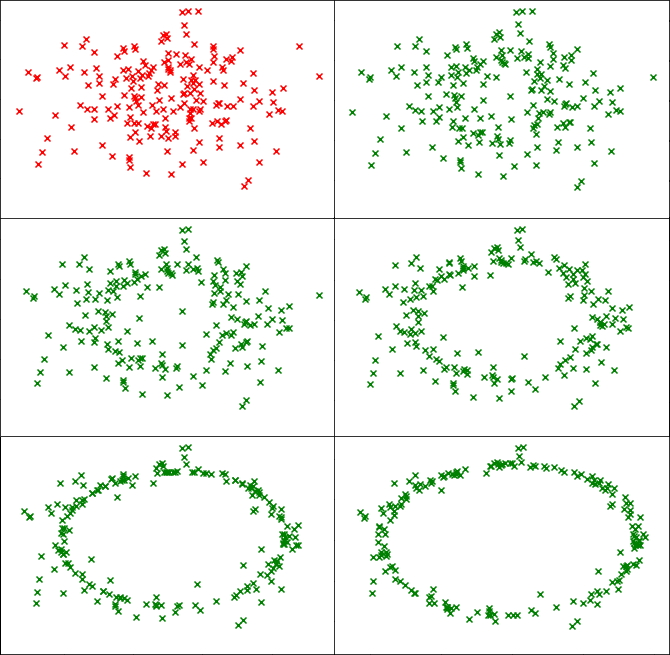}
\end{center}
\caption[Interpolants given by Algorithm \ref{algo:SBOT} from a Gaussian to an annulus, in 2D]{Interpolants given by Algorithm \ref{algo:SBOT} from a Gaussian to an annulus, in 2D. The top left figure (red) corresponds to the original sample. Time flows from left to right, and from top to bottom. Subsequently represented are the interpolants at time $t=k/5$ for $k=1,\ldots,5$. }
\label{fig:gtoadisp}
\end{figure}

Similarly, Figure \ref{fig:gto2g2} represents the initial and final configurations obtained from running Algorithm \ref{algo:SBOT} to transport a two-dimensional Gaussian distribution to a mixture of two Gaussians.
A diagonal covariance was used in the non-linearity $\phi_{nl}$ for the functional space in Algorithm \ref{algo:SBLOT}, and $N=30$ intermediary steps were used in Algorithm \ref{algo:SBOT}. This type of non-linearity is well adapted to separate samples along the horizontal and vertical axes.

Figure \ref{fig:gto2gdisp} represents the displacement interpolants at $t=k/5$ for $ k=1,\ldots ,5$, obtained from running Algorithm \ref{algo:SBOT} on the example in Figure \ref{fig:gto2g2}.

\begin{figure}[htb!]
\begin{center}
\begin{minipage}{0.35\textwidth}
\centering
\includegraphics[width=\textwidth]{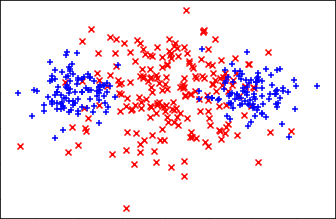}\\
Starting configuration
\end{minipage}
\begin{minipage}{0.35\textwidth}
\centering
\includegraphics[width=\textwidth]{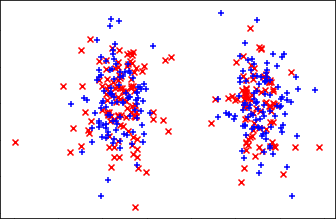}\\
Final configuration
\end{minipage}
\end{center}
\caption{Algorithm \ref{algo:SBOT} from a Gaussian to a mixture of 2 Gaussians, in 2D}
\label{fig:gto2g2}
\end{figure}

\begin{figure}[htb!]
\begin{center}
\includegraphics[scale=0.6]{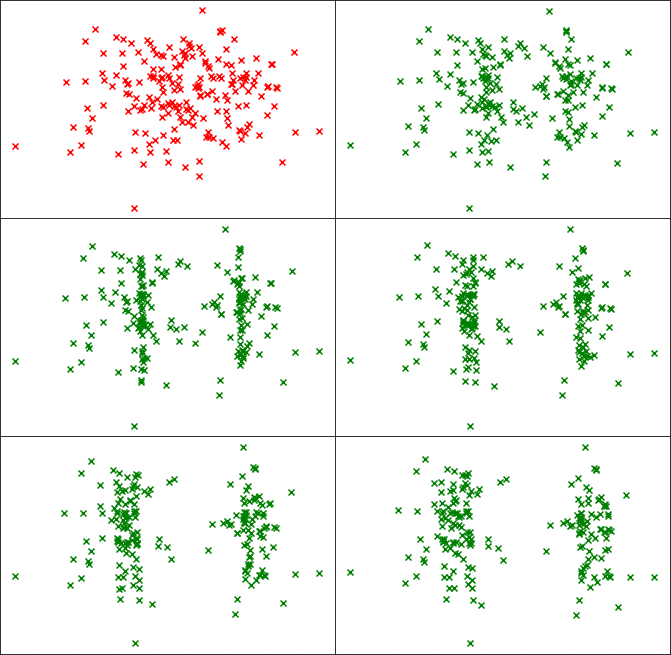}
\end{center}
\caption[Interpolants given by Algorithm \ref{algo:SBOT} from a Gaussian to a mixture of two Gaussians, in 2D]{Interpolants given by Algorithm \ref{algo:SBOT} from a Gaussian to a mixture of two Gaussians, in 2D. The top left figure (red) corresponds to the original sample. Time flows from left to right, and from top to bottom. Subsequently represented are the interpolants at time $t=k/5$ for $k=1,\ldots,5$. }
\label{fig:gto2gdisp}
\end{figure}

\subsection{Empirical analysis of convergence}

In this subsection, we empirically analyze the convergence of the algorithm in a situation where the generating distributions, as well as the optimal map, are known: $(x_i)_{i=1,\cdots,n}$ are i.i.d. samples of a standard Gaussian distribution, $(y_j)_{j=1,\cdot,m}$ are obtained through $y_i = \phi'(x_i)$ for $\phi(x) = |x|^{1+\epsilon}$ ($\epsilon= 1/4$). Brenier's theorem guarantees that, since $\phi$ is convex, $\phi'$ is the optimal map for the quadratic Wasserstein problem.

In a first set of experiments, we keep the number of samples constant at $n=m=500$, and we vary the number of intermediary steps $K$ in the global algorithm, raging through $K=1,2,3,5,10$.
In a second set of experiments, we keep the number of intermediary steps in the global algorithm constant at $K=10$, and vary the number of sample points, using $n=m=25,50,100,200,500$.
In both sets, we compute the experimental map $\nabla \phi_{exp}$ by \eqref{eq:compMaps}, and compare it to the optimal $\nabla \phi^*$ defined by:
\[
\nabla \phi^* (x) = (1 + \epsilon) x |x|^{\epsilon - 1}.
\]

In each experiment, three numerical quantities are computed:
\begin{enumerate}
\item The weighted $L^2$ norm ${\displaystyle \int |\nabla \phi_{exp}(x) - \nabla \phi^*(x)|^2 \mu(x)dx} \approx \sum_i |\nabla \phi_{exp}(x_i) - \nabla \phi^*(x_i)|^2 $,
\item The $L^{\infty}$ norm between $\nabla \phi_{exp}$ and $\nabla \phi^*$,
\end{enumerate}

For illustrative purposes, we show in Figure \ref{fig:compK} the differences between $\nabla \phi_{exp}$ and $\nabla \phi^*$ for various sets of parameters.
Tables \ref{table:varK} and \ref{table:varN} summarize the results.

\begin{table}[H]
\begin{center}
\begin{tabular}{|l|l|l|l|l|l|}
\hline
$K=$ & \multicolumn{1}{l|}{1} & \multicolumn{1}{l|}{2} & \multicolumn{1}{l|}{3} & \multicolumn{1}{l|}{5} & \multicolumn{1}{l|}{10} \\ \hline
$\E[|\nabla \phi^*(X) - \nabla \phi_{exp}(X)|^2]$ & 0.74 & 0.55 & 8.3 $\cdot 10^{-1}$   & 1.7 $\cdot 10^{-2}$  & 8.7 $\cdot 10^{-3}$  \\ \hline
$||\nabla \phi^* - \nabla \phi_{exp}||_{L^{\infty}}$ & 0.53  & 0.22  & 9.9 $\cdot 10^{-2}$  &  8.7 $\cdot 10^{-2}$  & 6.2 $\cdot 10^{-2}$  \\ \hline
\end{tabular}
\end{center}
\caption{Convergence as a function of the number $K$ of intermediary steps}
\label{table:varK}
\end{table}

\begin{table}[H]
\begin{center}
\begin{tabular}{|l|l|l|l|l|l|}
\hline
$n=m=$ & \multicolumn{1}{l|}{25} & \multicolumn{1}{l|}{50} & \multicolumn{1}{l|}{100} & \multicolumn{1}{l|}{200} & \multicolumn{1}{l|}{500} \\ \hline
$\E[|\nabla \phi^*(X) - \nabla \phi_{exp}(X)|^2]$ & 1.4 & 0.35 & 7.1 $\cdot 10^{-2}$  & 2.1 $\cdot 10^{-2}$ & 8.7 $\cdot 10^{-3}$ \\ \hline
$||\nabla \phi^* - \nabla \phi_{exp}||_{L^{\infty}}$ & 1.3  & 0.49  & 0.16  &  0.11 & 6.2 $\cdot 10^{-2}$  \\ \hline
\end{tabular}
\end{center}
\caption{Convergence as a function of the number of samples $n$}
\label{table:varN}
\end{table}

In practice, setting a number of samples less than $15$ in this example leads to poor convergence due to the extreme sparsity of data.

\begin{figure}[h]
\begin{minipage}{0.245\textwidth}
\centering
\includegraphics[width=\textwidth]{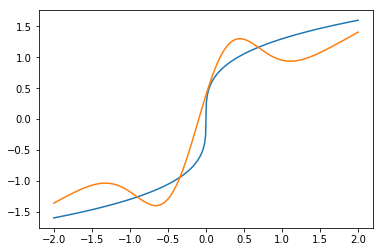}\\
$K=1$
\end{minipage}
\begin{minipage}{0.245\textwidth}
\centering
\includegraphics[width=\textwidth]{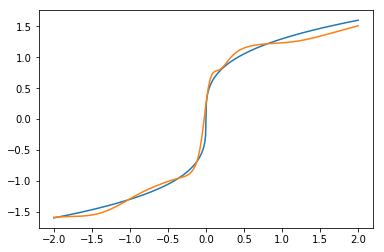}\\
$K=3$
\end{minipage}
\begin{minipage}{0.245\textwidth}
\centering
\includegraphics[width=\textwidth]{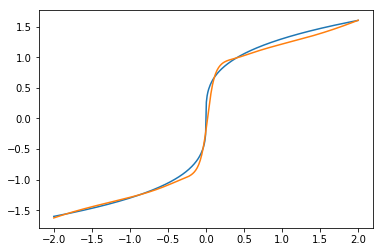}\\
$K=5$
\end{minipage}
\begin{minipage}{0.245\textwidth}
\centering
\includegraphics[width=\textwidth]{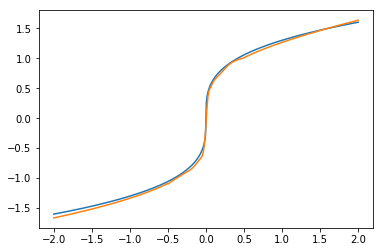}\\
$K=10$
\end{minipage}
\caption[Comparison between $\nabla \phi^*$ and $\nabla \phi_{exp}$ for different values of $K$]{Comparison between $\nabla \phi^*$ (blue) and $\nabla \phi_{exp}$ (orange) for different values of intermediary steps $K$.}
\label{fig:compK}
\end{figure}

Figure \ref{fig:compK} compares the optimal map $\nabla \phi^*$ with the computed map $\nabla \phi_{exp}$. Note that the one step algorithm does not provide a monotone solution, i.e. it is not the gradient of a convex function: the source and target distributions are not close enough to guaranty that. This is corrected through the introduction of intermediate steps, which brings the source and target distributions for each step closer to each other via displacement interpolation. For the example under consideration, the optimal solution is convex for any value of $K$ bigger than $4$. Notice also that, for $K=10$ and $n=500$, the solution approximates the exact one very accurately in the bulk of the distribution, as captured by the density-weighted $L_2$ norm of their difference. On the other hand, the $L_{\infty}$ norm is dominated by the behavior at the tails, where little data is present to guide the algorithm.

%The fact that the solutions do not exactly match at $0$ for $K=30$ is due to the relatively small number of transported and target samples in those areas.
% !TEX root = ../adaptiveot.tex

\section{Discussion and Conclusions}\label{sec:conclusion}

We have developed an adaptive methodology for the sample-based optimal transport problem under the standard quadratic cost function. The main advantage of the new procedure is that it does not require any external input on the form of the distributions that one seeks to match, or any expert knowledge on the type, location and size of the features in which the source and target distribution may differ. 

Even though the map $\nabla \phi$ and test function $g$ used at each step are parametric, by using the composition of many simple maps and having at one's disposal a ``lens'' within $g$ that can focus on any individual local mismatch at each step, the resulting procedure can be thought as effectively free of parameters, except for the number of intermediate distributions to use, a stopping criterion, and a couple of constants associated with the penalization of the nonlinear features. Thus, it has the potential to form the basis for a universal tool that can be transferred painlessly across fields. 

Two main ingredients allow for the procedure to capture arbitrary variability without making use of a huge dictionary of candidate features (in its current version, it uses only three: a linear feature for global displacements, a quadratic feature for global scalings, and a Gaussian feature for localized displacements). One ingredient, borrowed from prior work in \cite{kuang2017sample}, is the factorization of the potentially quite complex global map into a sequence of much simpler local maps between nearby distributions. The optimality of the composed map is guaranteed through the use of displacement interpolation. The second ingredient is the formulation of the local problem as a two-player game where the first player seeks to push forward one distribution into the other, while the second player develops features that show where the push-forward condition fails. The variational characterization of the relative entropy between distributions that gives rise to this game-theory formulation has the additional advantage of being sample-friendly, as it involves the two distributions only through the expected values of functions, which can be naturally replaced by empirical means. Because the map between any two consecutive distributions is close to the identity, local optimality is guaranteed by requiring this map to be the gradient of a potential.

Topics for future research include the extension of the algorithm to transportation costs different from the squared distance and, for the purpose of more efficient computability, the optimization of the minimax solver and the parallelization of the computation of the local maps. Most of all, we believe, the use of the new methodology in real applications will shed light on the issues that require further work, which may include the development of features and penalizations suitable for efficiently capturing sharp edges or removed objects.

%
%The sample based procedure presented in this paper can either be used as a standalone routine, or part of a broader barycenter optimal transport problem, and provides a natural framework for Data science applications of optimal transport.
%
%The algorithm introduced in section \ref{sec:algorithm} seeks an optimal map $T$ to move a source sample to a target sample, and does not require any user input to quantify the degree of matching between the transported sample and the target point cloud. Instead, this is quantified by the relative entropy the transported sample has with respect to the target sample.
%
%A variational characterization of the Kullback-Leibler divergence was introduced in section \ref{sec:AOT}, and yields a convenient expression for evaluation on samples. This formula introduces a second optimization problem, involving a function $g$ which measures the success of the mass transport by $T$, in every region of space.
%
%A competition between $T$ and $g$ ensues, and the choice of functional spaces defines the available moves for each player. This algorithm is used as a building block in a broader forward/backward procedure to solve the sample based optimal transport problem.

\begin{center}
\section*{\small Acknowledgments}
\end{center}
The authors would like to thank Yongxin Chen for connecting our variational formulation of the Kullback-Leibler divergence with the Donsker-Varadhan formula. This work has been partially supported by a grant from the Morse-Sloan Foundation. The work of Tabak was partially supported by NSF grant DMS-1715753 and ONR grant N00014-15-1-2355. The work of Essid was partially supported by NSF grant DMS-1311833.
\bigskip
% BibTeX users please use one of
\bibliographystyle{spmpsci}      % mathematics and physical sciences
\bibliography{adaptiveot}   % name your BibTeX database

%\newpage
%\listoffigures

\end{document}